\documentclass[reqno,11pt]{amsart}
 
\usepackage{amsmath}
\usepackage{amsthm,amssymb,color,comment}
\usepackage{bbm}
\usepackage{mathrsfs}
\usepackage{hyperref}
\usepackage[TS1,T1]{fontenc}
\usepackage[utf8]{inputenc}
\usepackage{dsfont}
\usepackage{tikz}
\usepackage{pgfplots}
\pgfplotsset{compat=1.18}
\usepackage{enumitem}
\usepackage{subfigure}
\usepackage{mathtools}
\usepackage{bbold}
\usepackage{esint}
\usepackage[sort]{cite}

\numberwithin{equation}{section}

\newtheorem{thm}{Theorem}[section]

\newtheorem{proposition}[thm]{Proposition}
\newtheorem{lem}[thm]{Lemma}

\newtheorem{Def}[thm]{Definition}

\theoremstyle{definition}

\newtheorem{rem}[thm]{Remark}
\newtheorem{cor}[thm]{Corollary}

\DeclareMathOperator*{\essinf}{ess\,inf}

\newcommand{\R}{\mathbb{R}}
\newcommand{\Td}{\mathbb{T}^{d}}
\newcommand{\Rd}{\mathbb{R}^{d}}

\newcommand{\eps}{\varepsilon}

\newcommand{\diff}{\mathop{}\!\mathrm{d}}

\newcommand{\ueps}{u_{\varepsilon}}
\newcommand{\veps}{v_{\varepsilon}}
\newcommand{\seps}{s_{\varepsilon}}

\newcommand{\weaks}{\overset{\ast}{\rightharpoonup}}

\newcommand{\weak}{\rightharpoonup}

\makeatletter
\newcommand{\doublewidetilde}[1]{{%
  \mathpalette\double@widetilde{#1}%
}}
\newcommand{\double@widetilde}[2]{%
  \sbox\z@{$\m@th#1\widetilde{#2}$}%
  \ht\z@=.9\ht\z@
  \widetilde{\box\z@}%
}
\makeatother

\author[J. A. Carrillo]{Jos{\'e} A. Carrillo}
\address{{\it Jos{\'e} A. Carrillo: } Mathematical Institute, University of Oxford, Woodstock Road, Oxford, OX2 6GG, United Kingdom}
\email{carrillo@maths.ox.ac.uk}

\author[A. J\"ungel]{Ansgar J{\"u}ngel}
\address{{\it Ansgar J{\"u}ngel: } Institute of Analysis and scientific Computing, TU Wien, Wiedner hauptstr. 8--10, 1040 Wien, Austria}
\email{juengel@tuwien.ac.at}

\author[J. Skrzeczkowski]{Jakub Skrzeczkowski}
\address{{\it Jakub Skrzeczkowski: } St John's College, University of Oxford, St Giles, Oxford, OX1 3JP, United Kingdom \& Mathematical Institute, University of Oxford, Woodstock Road, Oxford, OX2 6GG, United Kingdom}
\email{jakub.skrzeczkowski@maths.ox.ac.uk}

\author[Y. Yao]{Yao Yao}
\address{{\it Yao Yao: } Department of Mathematics, National University of Singapore, Block S17, 10 Lower Kent Ridge Road, Singapore 119076}
\email{yaoyao@nus.edu.sg}

\begin{document}

\title[Non-uniqueness of weak solutions to cross-diffusion systems]{Non-uniqueness of weak solutions to cross-diffusion systems with advection}

\begin{abstract}
A cross-diffusion system with advection is considered on the whole line, describing the dynamics of two segregating population species. Starting from two initial densities supported on the half-lines $x\leq 0$ and $x\geq 0$, respectively, we construct two distinct solutions of the system: one pair of densities remains confined to their initial supports and stay completely segregated, while the second pair of densities begins to invade the opposite half-line after a finite time (mixing). To the best of our knowledge, this provides one of the first examples of non-uniqueness for this class of equations, together with an explicit demonstration of mixing phenomena. The construction produces infinitely many mixing solutions and applies throughout the full range of pressure exponents. For a certain range of exponents, we additionally obtain quantitative estimates on the mixing process. 
\end{abstract}

\keywords{cross-diffusion systems, degenerate diffusion, Darcy's law, non-uniqueness of solutions, mixing.}

\subjclass{35A01, 35A02, 35D30, 35K65, 35Q92, 92D25}

\maketitle

\setcounter{tocdepth}{1}

\section{Introduction}

Spatial segregation phenomena in interacting biological populations have attracted considerable attention in recent decades. Among the mathematical models proposed to describe such processes, Busenberg--Travis systems \cite{BuTr83,MR736508}, and more generally aggregation-diffusion systems as in \cite{MR3783102,MR5028313} for instance, provide a particularly influential framework for investigating the emergence of segregated states through strong interspecific competition. These systems arise as cross-diffusion equations for competing species and exhibit a wide variety of qualitative behaviors, including coexistence, segregation, entropy structure, and the formation of sharp interfaces \cite{MR821681,MR3783102}. Cross-diffusion systems have attracted considerable attention over the past several years, both because of the substantial analytical challenges they pose \cite{MR4188329,MR5033040,MR3740386,MR4939528,MR4000848} and because of their broad relevance across a variety of scientific applications \cite{murakawa2015continuous,MR3948738,MR4902815}. Despite substantial progress in understanding the existence and asymptotic properties of solutions, fundamental questions like their uniqueness or non-uniqueness remain basically unsolved. In this work, we prove the non-uniqueness of weak solutions to cross-diffusion systems for two species with opposite advection generalizing Busenberg-Travis systems. We focus on cross-diffusion equations for two species of the form 
\begin{equation}\label{eq:cross-diffusion-intro-general-velocities}
\begin{split}
&\partial_t u = \partial_x(u\, \partial_x p(s)) + \partial_x(u \, \partial_x V^1),\\
&\partial_t v = \partial_x(v\, \partial_x p(s)) + \partial_x(v \, \partial_x V^2)
\quad\mbox{in }\R,\ t>0, \\
&u(0)=u^0,\quad v(0)=v^0\quad\mbox{in }\R. 
\end{split}
\end{equation}
Here, $u$ and $v$ represent nonnegative population densities, $s = u+v$ denotes the total density, $V^1$ and $V^2$ are (e.g.\ environmental) potentials, and  
\begin{equation}\label{eq:darcy_law_introduction}
p(s) = \begin{cases}
\frac{1}{\alpha - 1} s^{\alpha-1} &\mbox{ if } \alpha \neq 1,\\
\log s &\mbox{ if } \alpha = 1,
\end{cases} \qquad \qquad \mbox{ for } \alpha \in (0,\infty).
\end{equation}
System \eqref{eq:cross-diffusion-intro-general-velocities} corresponds to mass-balance equations with the velocity $v=-\partial_x p(s)$, which can be interpreted as the Darcy law, and accordingly $p(s)$ corresponds to the mixture pressure. Another interpretation comes from many-particle systems. Indeed, systems of the type \eqref{eq:cross-diffusion-intro-general-velocities} with/without linear/nonlinear advection are the mean-field-type limit of moderately interacting particle systems for an arbitrary number of species \cite{CDJ19,MR5033616,CHS25}. Furthermore, equations \eqref{eq:cross-diffusion-intro-general-velocities} emerge as the zero-inertia limit of compressible Navier--Stokes systems with relaxation and pressure force \cite{CCDJ25}.

The original works \cite{BuTr83,MR736508} without advection consider the ideal gas law $p(s)=s$, i.e.\ the case $\alpha=2$. The regime $\alpha>1$ is particularly significant, since the total density is then expected to exhibit finite-speed propagation, similarly to the solutions to the porous-medium equation, see \cite[Chap.~14]{MR2286292}. In addition, the incompressible limit $\alpha\to \infty$ reveals a close connection between aggregation-diffusion systems and free-boundary problems for tissue growth \cite{MR3162474,MR429164}.  

The existence of weak solutions to \eqref{eq:cross-diffusion-intro-general-velocities} in bounded intervals without advection was proved in \cite{MR887658} for $p(s)=s$ and in \cite{MR2652018} for general increasing functions $p$. The results were generalized to several space dimensions in \cite{BHIM12} for $p(s)=s$ and in \cite{DrJu20,MR4179253} for $p(s)=s^\alpha$ with $\alpha>1$. These results hold for non-segregated initial data $u^0+v^0>0$. The variational splitting scheme allows the authors of \cite{MR3870087} to prove conservation of segregation for initially segregated one-dimensional data even in the presence of vacuum. In \cite{MR4880211}, vacuum is allowed but an upper bound for the initial data is required. A general proof in one space dimension was recently given in \cite{skrzeczkowski2026global}; also see the stability result of \cite{Elb26}. 

The main analytical difficulty of \eqref{eq:cross-diffusion-intro-general-velocities} stems from its mixed hyperbolic–parabolic structure.
Indeed, the total density satisfies the porous-medium equation with quadratic nonlinearity, whose solution is smooth if the initial datum is positive. The relative densities $u/s$ and $v/s$ solve hyperbolic transport equations, whose solutions do not exhibit any regularizing effect. This structure is explored in a more general context in \cite{DHJ23} and analyzed for measure-valued solutions in \cite{HoJu25}. 

The additional advection terms $V^i:\R\to\R$ model spatial heterogeneity. They may also incorporate nonlocal interactions by taking $V^i = K^{i,1}\ast u + K^{i,2}\ast v$, where the kernels $K^{i,j}$ account for repulsive or attractive interactions between species \cite{MR2279324,MR3948738,CSS26}. The presence of advection gives rise to substantial mathematical difficulties, already at the level of proving the existence of weak solutions. Such a result has been obtained only recently for the full range $\alpha\in(0,\infty)$ in \cite{skrzeczkowski2026global}. We also refer to \cite{MR3795211} for a result in the case $\alpha>1$, where the velocities are chosen so that the two species do not mix, and to \cite{alpar_guy_newwork, elbar2025cross} for a complete treatment of the fast-diffusion regime $\alpha\leq 1$. Another recent result \cite{santambrogio2026segregated} establishes existence of segregated solutions (for segregated initial conditions) for $\alpha > 1/3$.

The question of uniqueness of solutions to \eqref{eq:cross-diffusion-intro-general-velocities}, even without advection, is very delicate due to the lack of parabolicity. If the initial total density $u^0+v^0$ is strictly positive, there is a unique classical solution \cite{DrJu20}, while the uniqueness of solutions cannot generally be expected if $u^0+v^0\ge 0$. The weak--strong uniqueness property for \eqref{eq:cross-diffusion-intro-general-velocities} can be proved as in \cite{ChJu19}, providing the stability of strong solutions within the class of weak solutions. It is shown in \cite{GSV15} that for every smooth function $\Phi(t)$ such that $\Phi(0)=u^0+v^0$, there exists a new total density $s$ such that $s(t)=\Phi(t)$ on the interface between the two segregated species. To the best of our knowledge, we are not aware of any work that explicitly proves the non-uniqueness of the pair $(u,v)$ of weak solutions. In this paper, we establish this property.

More precisely, we prove, starting from segregated initial data, that there exist two solutions to the system
\begin{equation}\label{eq:intro-toy-model-to-study-nonuniqueness}
\begin{split}
\partial_t u &= \partial_x(u\, \partial_x p(s)) - \partial_x u,\\
\partial_t v &= \partial_x(v\, \partial_x p(s)) + \partial_x v\quad\mbox{in }\R,\ t>0, \\
u(0)&=u^0,\quad v(0)=v^0\quad\mbox{in }\R. 
\end{split}
\end{equation}
System \eqref{eq:intro-toy-model-to-study-nonuniqueness} is precisely \eqref{eq:cross-diffusion-intro-general-velocities} for $V^1(x) = -x$, $V^2(x) = x$. One solution will stay trivially segregated for all $t \geq 0$, while the second one will mix after some time~$t>0$. The weak solutions to \eqref{eq:intro-toy-model-to-study-nonuniqueness} are defined as follows. We interpret the terms $u\, \partial_x p(s)$, $v\, \partial_x p(s)$ as 
\begin{equation}\label{eq:pressure_term_rewritten_derivative_intro}
u\, \partial_x p(s) = u \, p'(s) \, \partial_x s = u\, s^{\alpha-2} \, \partial_x s = \frac{u}{s} \,  s^{\alpha-1} \, \partial_x s = \frac{1}{\alpha} \, \frac{u}{s} \, \partial_x s^{\alpha},
\end{equation}
which allows us to nicely handle the pressure term for $\alpha \leq 1$. We always define $u/s= 0$ on the set $\{s = 0\}$. In fact, this does not matter because we have $\partial_x s^{\alpha} = 0$ a.e. on the set $\{s = 0\}$, see for instance \cite[p.~314]{MR2759829}.

\begin{Def}\label{def:weak_sol}
We say that a pair $(u,v)$ of two nonnegative functions $u, v \in L^{\infty}(0,T; L^1(\R))$ with $s:=u+v$ is a {\em weak solution} to \eqref{eq:intro-toy-model-to-study-nonuniqueness} with initial conditions $u^0, v^0 \in L^1(\R)$ if $\partial_x s^{\alpha} \in L^2((0,T)\times\R)$ and for all $\varphi, \phi \in C_c^{\infty}([0,T)\times\R)$ we have
\begin{equation}\label{eq:weak_form_original_form_u}
\int_0^T \int_{\R} u\, \partial_t \varphi \diff x \diff t + \int_{\R} u^0(x) \, \varphi(0,x) \diff x = \int_0^T \int_{\R} \left(\frac{1}{\alpha} \, \frac{u}{s}\, \partial_x s^{\alpha} - u \right)\, \partial_x \varphi \diff x \diff t,
\end{equation}
\begin{equation}\label{eq:weak_form_original_form_v}
\int_0^T \int_{\R} v\, \partial_t \phi \diff x \diff t + \int_{\R} v^0(x) \, \phi(0,x) \diff x = \int_0^T \int_{\R} \left(\frac{1}{\alpha} \, \frac{v}{s}\, \partial_x s^{\alpha} + v\right)\, \partial_x \phi \diff x \diff t.
\end{equation}
\end{Def}

\begin{Def}\label{def:regular_weak_sln}
We say that the weak solution $(u, v)$ to \eqref{eq:intro-toy-model-to-study-nonuniqueness} with the initial conditions $u^0, v^0 \in L^1(\R) \cap L^{\infty}(\R)$, $u^0\,|x|, v^0\,|x| \in L^1(\R)$ is a {\em regular weak solution} if it satisfies additionally $u, v \in L^{\infty}(0,T; L^1(\R)) \cap L^{\infty}(0,T; L^\infty(\R))$, $\partial_x s^{\frac{\alpha}{2}} \in L^2((0,T)\times\R)$, as well as $u\,|x|, v\,|x| \in L^{\infty}(0,T; L^1(\Rd))$.
\end{Def}

The additional regularity of regular weak solutions implies that the fluxes $\frac{1}{\alpha}\frac{u}{s}\, \partial_x s^{\alpha} - u$ and $\frac{1}{\alpha} \, \frac{v}{s}\, \partial_x s^{\alpha} + v$ belong to $L^2((0,T)\times\R)$, so by considering in \eqref{eq:weak_form_original_form_u}--\eqref{eq:weak_form_original_form_v} test functions $\varphi, \phi \in C_c^{\infty}((0,T)\times\R)$ which are dense in $L^2(0,T; H^1(\R))$, we immediately deduce an equivalent definition of weak solutions, which will be exploited in this paper.

\begin{lem}\label{lem_additional_prop_reg_weak_sln}
Let $(u,v)$ be a regular weak solution to \eqref{eq:intro-toy-model-to-study-nonuniqueness}. Then, $\partial_t u, \partial_t v \in L^2(0,T; H^{-1}(\Rd))$ and for all $\varphi, \phi \in L^2(0,T; H^1(\Rd))$ we have
\begin{equation}\label{eq:weak_form_H-1_u}
    \int_0^T \langle \partial_t u, \varphi \rangle_{H^{-1}(\R), H^1(\R)} \diff t = - \int_0^T \int_{\R} \left(\frac{1}{\alpha} \, \frac{u}{s}\, \partial_x s^{\alpha} - u \right)\, \partial_x \varphi \diff x \diff t,
    \end{equation}
    \begin{equation}\label{eq:weak_form_H-1_v}
    \int_0^T \langle \partial_t v, \phi \rangle_{H^{-1}(\R), H^1(\R)} \diff t = - \int_0^T \int_{\R} \left(\frac{1}{\alpha} \, \frac{v}{s}\, \partial_x s^{\alpha} + v \right)\, \partial_x \phi \diff x \diff t.
    \end{equation}
Moreover, $u, v \in C([0,T]; H^{-1}(\R))$ and $\lim_{t\to 0} u(t,\cdot) = u^0(\cdot)$, $\lim_{t\to 0} v(t,\cdot) = v^0(\cdot)$ in~$H^{-1}(\R)$.
\end{lem}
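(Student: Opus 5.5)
The plan is to show first that the two fluxes lie in $L^2((0,T)\times\R)$, and then to read off $\partial_t u,\partial_t v$ as distributions from \eqref{eq:weak_form_original_form_u}--\eqref{eq:weak_form_original_form_v}.

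\textbf{Step 1: the fluxes are in $L^2$.} Write the pressure part of the $u$-flux as
\[
\frac{1}{\alpha}\,\frac{u}{s}\,\partial_x s^{\alpha}=\frac{2}{\alpha}\,\frac{u}{s}\,s^{\frac{\alpha}{2}}\,\partial_x s^{\frac{\alpha}{2}}.
\]
Here $0\le u/s\le 1$. Moreover $s^{\alpha/2}\in L^\infty$, because $s\in L^\infty((0,T)\times\R)$ and $\alpha>0$. Finally $\partial_x s^{\alpha/2}\in L^2$ by the definition of a regular weak solution. Hence this term is in $L^2((0,T)\times\R)$. The drift term $u$ belongs to $L^\infty(0,T;L^1\cap L^\infty)\subset L^2((0,T)\times\R)$. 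The same argument applies to $v$. I denote the two fluxes by $J_u$ and $J_v$.

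\textbf{Step 2: identification of $\partial_t u$ in $L^2(0,T;H^{-1})$.} Take $\varphi\in C_c^\infty((0,T)\times\R)$ in \eqref{eq:weak_form_original_form_u}. The initial-data term vanishes, so the distributional time derivative satisfies $\partial_t u=\partial_x J_u$. Since $J_u\in L^2$, the functional $\varphi\mapsto-\int_0^T\!\int_{\R} J_u\,\partial_x\varphi$ is bounded by $\|J_u\|_{L^2}\|\varphi\|_{L^2(0,T;H^1)}$. The space $C_c^\infty((0,T)\times\R)$ is dense in $L^2(0,T;H^1(\R))$, so the functional extends uniquely to this space. Its dual is $L^2(0,T;H^{-1}(\R))$, so $\partial_t u\in L^2(0,T;H^{-1})$ and \eqref{eq:weak_form_H-1_u} holds for all $\varphi\in L^2(0,T;H^1)$. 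The argument for $v$ is identical.

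\textbf{Step 3: continuity in time.} Note that $u\in L^\infty(0,T;L^2)\subset L^2(0,T;H^{-1})$, and by Step 2 also $\partial_t u\in L^2(0,T;H^{-1})$. Therefore $u\in H^1(0,T;H^{-1})$, which embeds into $C([0,T];H^{-1})$ after modification on a null set in time. In particular $u(t)-u(\tau)=\int_\tau^t\partial_t u$ in $H^{-1}$.

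\textbf{Step 4: the initial trace, the main point requiring care.} Let $\tilde u(0)$ denote the $H^{-1}$-limit of $u(t)$ as $t\to0$, which exists by Step 3; it remains to show $\tilde u(0)=u^0$. Take $\varphi(t,x)=\eta(t)\psi(x)$ with $\psi\in C_c^\infty(\R)$, $\eta\in C_c^\infty([0,T))$ and $\eta(0)=1$, and insert it in \eqref{eq:weak_form_original_form_u}.

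For the $H^1(0,T;H^{-1})$ function $u$ we have the integration by parts formula
\[
\int_0^T \eta'\,\langle u,\psi\rangle\diff t=-\eta(0)\langle\tilde u(0),\psi\rangle-\int_0^T\eta\,\langle\partial_t u,\psi\rangle\diff t .
\]
By Step 2, the last integral equals $\int_0^T\!\int_{\R}\eta\, J_u\,\partial_x\psi$, which is the flux term in \eqref{eq:weak_form_original_form_u}. Comparing with the weak formulation, the flux terms cancel, and we are left with $\langle\tilde u(0),\psi\rangle=\int_{\R} u^0\psi\diff x$ for every $\psi\in C_c^\infty(\R)$.

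Since $u^0\in L^1\cap L^\infty\subset L^2\subset H^{-1}$, and $C_c^\infty(\R)$ is dense in $H^1(\R)$, we conclude $\tilde u(0)=u^0$ in $H^{-1}$. The same argument gives the result for $v$.
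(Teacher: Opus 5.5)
Your proposal is correct and follows the same route as the paper: $L^2$ fluxes, then density of $C_c^\infty((0,T)\times\R)$ in $L^2(0,T;H^1(\R))$ to get \eqref{eq:weak_form_H-1_u}--\eqref{eq:weak_form_H-1_v}, then $H^1(0,T;H^{-1}(\R))\subset C([0,T];H^{-1}(\R))$ for continuity. Your Step~1 can be shortened, since $\partial_x s^{\alpha}\in L^2((0,T)\times\R)$ is already part of Definition~\ref{def:weak_sol} and $0\le u/s\le 1$, so the extra regularity is needed only for the drift terms.

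The one difference is how you identify the initial trace. You use the vector-valued integration-by-parts formula with a smooth $\eta$ satisfying $\eta(0)=1$. The paper instead inserts the piecewise-linear cutoffs $\eta_\delta$ to obtain the identity \eqref{eq:weak_formulation_H^-1,H^1_pointwise_in_time} for every $t$, and lets $t\to0$ there; that pointwise-in-time identity is reused later for the gluing in Corollary~\ref{cor:nonuniqueness_mixing}.
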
 

To see the last part of Lemma~\ref{lem_additional_prop_reg_weak_sln}, we notice that \eqref{eq:weak_form_original_form_u}--\eqref{eq:weak_form_original_form_v} are valid (by density) for test functions of the form $\varphi(t,x) = \eta(t)\,\psi(x)$ with $\eta\in C[0,T]$, $\eta'\in L^{\infty}(0,T)$, $\eta$ supported on $[0,T)$, and $\psi \in H^1(\R)$. For fixed $t<T$ and $\delta>0$ such that $t+\delta <T$, we consider $\eta_{\delta}$ such that $\eta_{\delta}(\tau) = 1$ for $\tau \in [0,t]$, $\eta_{\delta}(\tau)=0$ for $\tau\in[t+\delta,\infty)$, and $\eta_{\delta}(\tau)=1-(\tau-t)/\delta$ for $\tau \in [t, t+\delta]$. Hence, $\eta_{\delta} \to \mathds{1}_{[0,t]}$ as $\delta \to 0$ and $\eta'_{\delta} = -(1/\delta)\,\mathds{1}_{[t,t+\delta]}$. Therefore,
$$
\int_0^T \int_{\R} u\, \partial_t \eta_{\delta} \, \psi \diff x \diff t = -\frac{1}{\delta}\int_{t}^{t+\delta} \langle u(\tau, \cdot),  \psi \rangle_{H^{-1}(\R), H^1(\R)}  \diff \tau \to - \langle u(t, \cdot),  \psi \rangle_{H^{-1}(\R), H^1(\R)}
$$
by $u \in C([0,T]; H^{-1}(\R))$. It follows from \eqref{eq:weak_form_original_form_u} that 
\begin{equation}\label{eq:weak_formulation_H^-1,H^1_pointwise_in_time}
- \langle u(t, \cdot),  \psi \rangle_{H^{-1}(\R), H^1(\R)} + \langle u^0(\cdot),  \psi \rangle_{H^{-1}(\R), H^1(\R)}= \int_0^t \int_{\R} \left(\frac{1}{\alpha} \, \frac{u}{s}\, \partial_x s^{\alpha} - u \right)\, \partial_x \psi \diff x \diff \tau,
\end{equation}
which implies that $\lim_{t\to 0} u(t,\cdot) = u^0(\cdot)$ in $H^{-1}(\R)$, since $\frac{1}{\alpha} \, \frac{u}{s}\, \partial_x s^{\alpha} - u \in L^2((0,T)\times\R)$. The same argument works for $v$.

Our main result reads as follows:

\begin{thm}\label{thm:main_nonuniqueness}
Let $\alpha >0$. Let the initial condition $u^0 \geq 0$, $u^0 \neq 0$, $u^0 \in L^1(\R)\cap L^{\infty}(\R)$, $u^0$~be compactly supported on $(-\infty,0]$, and let $v^0(x) = u^0(-x)$. There exist two pairs $(u_{A}, v_{A})$, $(u_{B}, v_{B})$ of regular weak solutions to \eqref{eq:intro-toy-model-to-study-nonuniqueness} in the sense of Definition \ref{def:regular_weak_sln} such that:
\begin{itemize}
\item $(u_{A}, v_{A})$ stays segregated, i.e.\ for a.e. $t$, $\int_{0}^{\infty} u_{A}(t,x) \diff x = \int_{-\infty}^0 v_{A}(t,x)\diff x =~0$,
\item $(u_{B}, v_{B})$ mixes in finite time, i.e., there exists a set of times $\mathcal{F}$ of positive measure such that for $t\in\mathcal{F}$, we have $\int_{0}^{\infty} u_B(t,x) \diff x\!>\!0$. Moreover, 
\begin{equation}\label{eq:estimate_time_mixing_main_statement}
\essinf \mathcal{F} \leq -\frac{\int_{-\infty}^0 u^0(x) \, x \diff x}{\int_{-\infty}^0 u^0(x) \diff x},
\end{equation}
where $\essinf D = \sup\{d\in \R: D \cap (-\infty,d] \mbox{ is a null set}\}$ for any set $D\subset \R$.
\end{itemize}
\end{thm}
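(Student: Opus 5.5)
The plan is to build the two solutions separately. The segregated pair $(u_A,v_A)$ comes from a half-line problem with a no-flux condition. The mixing pair $(u_B,v_B)$ comes from a compactness argument on a symmetric family of non-segregated approximations, and a first-moment identity then forces mixing before the time in \eqref{eq:estimate_time_mixing_main_statement}.

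\textbf{Step 1: the segregated solution $(u_A,v_A)$.} Solve on $(-\infty,0)$ the drift porous-medium problem
$$\partial_t u = \partial_x\big(\tfrac{1}{\alpha}\partial_x u^{\alpha}\big) - \partial_x u, \qquad \big(\tfrac1\alpha\partial_x u^\alpha - u\big)(t,0^-)=0,\qquad u(0)=u^0.$$
For existence I would use a JKO/gradient-flow scheme on the half-line, or approximation by strictly positive data on truncated intervals. From this I expect the bounds $u\in L^\infty(L^1\cap L^\infty)$, $\partial_x u^{\alpha/2}\in L^2$ and $u|x|\in L^\infty(L^1)$, via the standard entropy and moment estimates. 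Then set $u_A=u\mathds 1_{x\le0}$ and $v_A(t,x)=u_A(t,-x)$, so $s_A$ is even and continuous across $0$, hence $\partial_x s_A^\alpha\in L^2(\R)$. Since $u_A/s_A=\mathds 1_{x<0}$ a.e. on $\{s_A>0\}$, the flux of $u_A$ in \eqref{eq:weak_form_original_form_u} equals $\tfrac1\alpha\partial_x u^\alpha-u$ on $x<0$ and $0$ on $x>0$. Testing the half-line weak formulation (with the no-flux condition) against $\varphi$ restricted to $x\le0$ therefore gives \eqref{eq:weak_form_original_form_u}. The $v$ equation follows by the symmetry $x\mapsto -x$.

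\textbf{Step 2: the mixing solution $(u_B,v_B)$.} Regularize the data by $u^0_\delta=u^0+\delta g$ and $v^0_\delta(x)=u^0_\delta(-x)$, with $g>0$ a fixed Gaussian, so that $s^0_\delta>0$. The corresponding classical solutions \cite{DrJu20}, or the solutions of \cite{skrzeczkowski2026global}, are symmetric, $v_\delta(t,x)=u_\delta(t,-x)$. The ratio $r_\delta=u_\delta/s_\delta$ is transported by the velocity field of $u$, namely $-\partial_x p(s_\delta)+1$. The uniform estimates behind Definition \ref{def:regular_weak_sln} should pass to the limit $\delta\to0$ by the compactness argument of \cite{skrzeczkowski2026global}, giving a regular weak solution $(u_B,v_B)$. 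Infinitely many such solutions should arise by varying $g$.

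\textbf{Step 3: forcing mixing.} Let $M=\int u^0$ and $m(t)=\int x\,u_B(t,x)\diff x$. Testing \eqref{eq:weak_formulation_H^-1,H^1_pointwise_in_time} with $\psi\approx x$, justified by the moment bound, gives
$$m(t)=m(0)+Mt-\frac1\alpha\int_0^t\!\!\int_\R \frac{u_B}{s_B}\,\partial_x s_B^\alpha \diff x\diff\tau .$$
Suppose $u_B(t)$ stayed supported in $(-\infty,0]$ for a.e. $t\le T$ with $T>-m(0)/M$. Then $m(T)\le 0$, while $m(0)+MT>0$. This yields a contradiction provided the pressure integral is nonpositive for the constructed solution. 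So the key is to propagate, from the approximations, a sign condition of the form $\int_\R r_\delta\,\partial_x s_\delta^\alpha\le 0$ (or an approximate version vanishing as $\delta\to0$). To obtain it I would exploit the symmetry $r_\delta(x)+r_\delta(-x)=1$, which gives
$$\int_\R r_\delta\,\partial_x s_\delta^\alpha=\int_0^\infty (r_\delta(x)-r_\delta(-x))\,\partial_x s_\delta^\alpha\diff x,$$
together with a monotonicity property of $r_\delta$ preserved by one-dimensional transport.

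\textbf{Main obstacle.} I expect Step 3 to be the hard part. Obtaining the sign, and passing the resulting inequality to the limit where $u_B/s_B$ is only weakly controlled, is delicate. Monotonicity of $r_\delta$ alone gives the wrong sign when $s_\delta$ is unimodal. So I would first try to choose the perturbation $g$ (e.g.\ with a dip at $0$) so that $s_\delta$ develops a profile increasing on $(0,\infty)$ near the interface during the relevant time window. Failing that, I would restrict the family of mixing solutions to those for which this can be arranged. The existence estimates and the verification for $(u_A,v_A)$ are comparatively routine.
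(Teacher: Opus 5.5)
Your Step 1 matches the paper's construction of $(u_A,v_A)$, and your first-moment identity in Step 3 is the same center-of-mass identity the paper uses. The gap is in how you close the contradiction. Under the segregation hypothesis, $u_B/s_B=\mathds{1}_{\{x<0\}}$ a.e.\ on $\{s_B>0\}$, and $\partial_x s_B^\alpha=0$ a.e.\ on $\{s_B=0\}$. Hence, for a.e.\ $t$,
\[
\int_\R \frac{u_B}{s_B}\,\partial_x s_B^\alpha \diff x=\int_{-\infty}^0 \partial_x s_B^\alpha \diff x= s_B^\alpha(t,0)\geq 0,
\]
whatever the profile of $s_B$. So the sign condition you want cannot come from a monotonicity property of $r_\delta$, or from a perturbation $g$ with a dip at $0$. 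For a segregated limit it is \emph{equivalent} to $s_B(t,0)=0$ for a.e.\ $t$; otherwise the pressure term has exactly the wrong sign.

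Moreover, the drift moves $u$ right and $v$ left, so mass piles up at the interface and any dip in the data gets filled. The approximate sign condition is therefore false for your approximations, as you suspected. Restricting to solutions for which the sign ``can be arranged'' is not an argument, so as written the mixing proof does not close.

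The missing ingredient is a mechanism forcing $s(\tau,0)=0$ for a segregated limit. The paper obtains it by comparing two entropies. First, the individual entropy inequality
\[
\int_\R \big(u\log u+v\log v\big)(t)\diff x+\frac{4}{\alpha^2}\int_0^t\!\!\int_\R|\partial_x s^{\alpha/2}|^2\diff x\diff\tau\leq\int_\R \big(u^0\log u^0+v^0\log v^0\big)\diff x
\]
holds for the approximations and passes to the limit by lower semicontinuity. Second, the paper works directly on the limit, testing the $s$-equation with $\log\max(s,\delta)\,\psi_R$ via the $H^{-1}$--$H^1$ chain rule, so that lower semicontinuity is only used at $t=0$. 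Under segregation this gives
\[
\int_\R s(t)\log s(t)\diff x+\frac{4}{\alpha^2}\int_0^t\!\!\int_\R|\partial_x s^{\alpha/2}|^2\diff x\diff\tau-2\int_0^t s(\tau,0)\diff\tau\geq\int_\R s^0\log s^0\diff x.
\]
The boundary term comes from the cross term $\int (u-v)\,\partial_x s/s$, using $u=s$ on $x<0$ and $v=s$ on $x>0$. Since $s\log s=u\log u+v\log v$ under segregation, subtracting the two inequalities gives $\int_0^t s(\tau,0)\diff\tau\le 0$. Only then does your moment identity yield the contradiction and the bound on $\essinf\mathcal{F}$.

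There are also some secondary issues in Step 2:
\begin{itemize}
\item The symmetry $v_\delta(t,x)=u_\delta(t,-x)$ of the solutions from \cite{skrzeczkowski2026global} requires a uniqueness result you do not have.
\item \cite{DrJu20} does not cover the advective problem on $\R$.
\item Obtaining infinitely many solutions ``by varying $g$'' is unjustified, since different $g$ may give the same limit. The paper instead glues solutions in time.
\end{itemize}
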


To the best of our knowledge, this is the first result establishing non-uniqueness of weak solutions for cross-diffusion systems with advection. We note, however, that an independent construction by Charles Elbar and Guy Parker appeared online at the same time as the present work \cite{Elbar_Parker}.

A simple consequence is that one can construct two (and, in fact, infinitely many) mixing solutions by gluing in time a non-mixing solution with a mixing one, leading to the following corollary.

\begin{cor}\label{cor:nonuniqueness_mixing}
Under the assumptions of Theorem \ref{thm:main_nonuniqueness}, there exist infinitely many distinct regular weak solutions to \eqref{eq:intro-toy-model-to-study-nonuniqueness} that mix in finite time. In contrast, there exists a unique regular weak solution $(u,v)$ that remains segregated on $(-\infty,0]$ and $[0,\infty)$, in the sense that $\int_{0}^{\infty} u(t,x) \diff x = \int_{-\infty}^0 v(t,x)\diff x =~0$ for a.e. $t\geq 0$.
\end{cor}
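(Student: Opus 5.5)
The corollary has two parts: infinitely many mixing solutions by gluing in time, and uniqueness of the segregated solution by an $H^{-1}$-type argument for the total density.

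\emph{Gluing.} Let $(u_A,v_A)$ be the segregated solution of Theorem~\ref{thm:main_nonuniqueness}. For a time $T_0>0$, the pair $(u_A(T_0),v_A(T_0))$ is again segregated and symmetric, since the system is invariant under $x\mapsto -x$, $u\leftrightarrow v$. It also lies in $L^1\cap L^\infty$ with finite first moment.

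I would apply the mixing construction of Theorem~\ref{thm:main_nonuniqueness} from this new initial datum, shifted in time, and call the result $(u_B^{T_0},v_B^{T_0})$. The theorem asks for compact support of $u^0$. So I would either check that $u_A(T_0)$ stays compactly supported, using finite speed of propagation for $\alpha>1$ and the inward drift, or check that the construction only uses the first-moment hypothesis. I expect this to be the main technical obstacle; the fallback is to restrict to those $T_0$ at which the support is compact.

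Define the glued pair to equal $(u_A,v_A)$ on $[0,T_0]$ and $(u_B^{T_0},v_B^{T_0})(\cdot-T_0)$ afterwards. It satisfies Definition~\ref{def:weak_sol} for the following reason. By Lemma~\ref{lem_additional_prop_reg_weak_sln} both pieces are continuous in $H^{-1}$ and satisfy the pointwise-in-time identity \eqref{eq:weak_formulation_H^-1,H^1_pointwise_in_time}. The two identities match at $t=T_0$, so they add to give the identity on the whole interval. Testing with $\eta(t)\psi(x)$ and using density then recovers \eqref{eq:weak_form_original_form_u}--\eqref{eq:weak_form_original_form_v}. The regularity requirements of Definition~\ref{def:regular_weak_sln} are preserved on each piece.

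\emph{Infinitely many distinct solutions.} I would choose times inductively. Given $T_0^k$, the glued solution mixes on a set $\mathcal{F}_k\subset(T_0^k,\infty)$ of positive measure. Pick $T_0^{k+1}$ such that $\mathcal{F}_k\cap(T_0^k,T_0^{k+1})$ still has positive measure. Then the $(k+1)$-st solution is segregated on that set while the $k$-th is not, so the solutions are pairwise distinct.

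\emph{Uniqueness of the segregated solution.} Suppose $(u,v)$ is segregated. Then $u=s\,\mathds{1}_{x<0}$ and $v=s\,\mathds{1}_{x>0}$, so adding \eqref{eq:weak_form_H-1_u} and \eqref{eq:weak_form_H-1_v} shows that $s$ solves the scalar equation
\[
\partial_t s=\partial_x\Bigl(\tfrac1\alpha\partial_x s^\alpha+\operatorname{sign}(x)\,s\Bigr).
\]
Take two segregated solutions with the same data and set $w=s_1-s_2$ and $W(t,x)=\int_{-\infty}^x w(t,y)\diff y$. Then $W(t,\cdot)\in L^2$: conservation of mass makes $W$ vanish at both ends, and the $|x|$-moments make it decay. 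Test the equation for $w$ with $-W$ via Lemma~\ref{lem_additional_prop_reg_weak_sln}, which is justified after a truncation. This gives
\[
\tfrac12\tfrac{d}{dt}\|W\|_2^2=-\tfrac1\alpha\int(s_1^\alpha-s_2^\alpha)(s_1-s_2)\diff x-\int \operatorname{sign}(x)\,\partial_x\bigl(\tfrac{W^2}{2}\bigr)\diff x .
\]
The first term is nonpositive by monotonicity. The second equals $W(t,0)^2\ge 0$, coming from $\partial_x\operatorname{sign}=2\delta_0$, and is therefore not favorable.

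The entire uniqueness argument hinges on controlling $W(t,0)^2$. It is not a priori bounded by $\|W\|_2^2$, and this is where I expect the real difficulty.
\begin{itemize}
\item My first attempt is to use segregation: $W(t,0)=\int_{-\infty}^0(u_1-u_2)\diff x$ is the difference of the masses of $u$, and each is conserved because no flux crosses $x=0$ for a segregated solution. So $W(t,0)=0$, which closes the Gronwall argument with $W\equiv0$.
\item If no-flux at the origin cannot be justified directly, the fallback is to split into the two half-lines and run the same duality argument on each.
\end{itemize}
Finally, $s_1=s_2$ together with $u_i=s_i\mathds{1}_{x<0}$ and $v_i=s_i\mathds{1}_{x>0}$ gives $(u_1,v_1)=(u_2,v_2)$.
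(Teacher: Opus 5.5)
Your gluing construction and the inductive choice of gluing times are the same as the paper's argument. This includes recovering \eqref{eq:weak_form_original_form_u}--\eqref{eq:weak_form_original_form_v} from the pointwise-in-time identity \eqref{eq:weak_formulation_H^-1,H^1_pointwise_in_time}. On the compact-support issue, your fallback of choosing $T_0$ with $u_A(T_0)$ compactly supported does not work for $\alpha\le 1$: the support fills the half-line instantly, so no such $T_0>0$ exists. Use your other option instead, that the construction only needs $L^1\cap L^\infty$ data with a finite first moment. This is also what the paper tacitly relies on.

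For uniqueness of the segregated solution you take a genuinely different route. The paper notes that $s=u$ on $(-\infty,0]$, so $u$ is a weak solution of the half-line no-flux problem \eqref{eq:Neumann_problem_on_a_half_space}, and cites a uniqueness theorem for that problem. You stay on the whole line and run a one-dimensional $\dot H^{-1}$ (primitive-function) contraction for the scalar equation for $s$ with the discontinuous drift $\operatorname{sign}(x)$. This is self-contained and needs only the regularity of Definition~\ref{def:regular_weak_sln} plus first moments, which put $W(t,\cdot)$ in $L^2$. It also covers all $\alpha>0$ at once through monotonicity of $s\mapsto s^\alpha$. The paper's route is shorter but rests on an external result for the half-line problem with drift.

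One correction: your sign for the drift term is wrong. Integrating the flux form gives $\partial_t W=\tfrac1\alpha\partial_x(s_1^\alpha-s_2^\alpha)+\operatorname{sign}(x)\,w$. (Testing the $w$-equation with $-W$ is not the right pairing; the test function there must be an antiderivative of $W$.) Hence
\[
\tfrac12\tfrac{d}{dt}\|W\|_2^2=-\tfrac1\alpha\int(s_1^\alpha-s_2^\alpha)(s_1-s_2)\diff x+\int\operatorname{sign}(x)\,\partial_x\bigl(\tfrac{W^2}{2}\bigr)\diff x ,
\]
and the last integral equals $-W(t,0)^2\le 0$, because the drift $-\operatorname{sign}(x)$ is compressive. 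So the origin is not a real difficulty.

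Your observation that $W(t,0)=\int_{\R}u_1(t)\diff x-\int_{\R}u_2(t)\diff x=0$ makes the term vanish anyway, so the argument closes either way. That identity is best justified by conservation of each species' mass on $\R$ together with segregation, not by a separate no-flux statement at $x=0$.
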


Finding a selection principle for the constructed solutions is a challenging open problem.
We can also prove some quantitative bounds as outlined below. 
\begin{thm}\label{thm:quantitative_bounds}
Let $\alpha \in (1,2)$. Let $(u,v)$ be the mixing solution from Theorem \ref{thm:main_nonuniqueness} such that $u(t,x)=v(t,-x)$ a.e. and let $s=u+v$. Then, there exists constants $C_1(\alpha, s^0)$, $C_2(\alpha, s^0)$, $C_3(\alpha, s^0)$, $C_4(\alpha, s^0)$ depending on $\alpha$, $\|s^0\|_{L^1(\R)}$, $\|s^0\|_{L^{\infty}(\R)}$, $\int_{\R} s^0\, |x|^2 \diff x$ such that for a.e.\ $t$ at least one of the following holds
\begin{equation}\label{eq:quant_bound_main_thm_log_one}
- C_4(\alpha,s^0) -C_1(\alpha, s^0)\, (1+\log (t+1)) + C_3(\alpha,s^0) \, t^{(\alpha+1)/(3\alpha)} \leq 2 \int_0^t \int_{-\infty}^0 {v^{2-\alpha}} \diff x \diff \tau,
\end{equation}
\begin{equation}\label{eq:quant_bound_main_thm_energy_one}
C_2(\alpha,s^0)\, (1+t) \leq  2 \int_{-\infty}^0 v(t,x)\, |x| \diff x + 2\,\alpha^2 \int_0^t \int_{-\infty}^0 v(\tau,x) \diff x \diff \tau.
\end{equation}
\end{thm}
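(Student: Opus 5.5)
The plan is to derive two evolution identities for the symmetric mixing solution and to combine them with the porous-medium-type dissipation of $s$. By symmetry $u(t,x)=v(t,-x)$, the total density $s$ is even, and it is enough to look at $v$ on the half-line $x\le 0$; this is the region $v$ is invading, since $v^0$ lives on $[0,\infty)$. Adding \eqref{eq:weak_form_H-1_u} and \eqref{eq:weak_form_H-1_v} gives $\partial_t s=\partial_x(s\,\partial_x p(s))+\partial_x(v-u)$. Because the advection term has zero total $x$-moment contribution for even data, testing with $|x|^2$ (via a cutoff $\psi_R(x)\,x^2$ in \eqref{eq:weak_formulation_H^-1,H^1_pointwise_in_time} and $R\to\infty$, justified by the regular weak solution integrability) should give
\[
\frac{d}{dt}\int_{\R} s\,x^2\diff x=\frac{2(\alpha-1)}{\alpha}\int_{\R}s^{\alpha}\diff x+4\int_{\R}(v-u)\,x\,\mathds{1}\cdots
\]
The precise form must be worked out. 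The first term yields growth of the second moment, while the $L^1$-norm of $s$ is conserved.

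Step one is a lower bound on the spreading. Since $\alpha\in(1,2)$, I would first bound $\int s^{\alpha}$ from below by interpolation between mass and second moment (Hölder/Carlson type, $\|s\|_{L^1}\lesssim \|s\|_{L^\alpha}^{\theta}(\int s x^2)^{1-\theta}$). I would also use the energy estimate $\frac{d}{dt}\int s^{\alpha}\,/(\alpha(\alpha-1))+\int s|\partial_x p(s)|^2\le \text{(advection terms)}$ to control $\int_0^t\int s^\alpha$ from above by $C(1+\log(t+1))$, coming from the $\partial_x(v-u)$ contribution. Together these should give $\int s x^2\ge C_3 t^{(\alpha+1)/(3\alpha)}-C_4-C_1(1+\log(t+1))$ for the part of the mass that has moved, once the relevant exponents are tracked.

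Step two is the half-line identity. Testing the $v$-equation with $\phi=|x|\,\mathds{1}_{x<0}$, which is admissible in $H^1_{loc}$ after truncation, gives
\[
\frac{d}{dt}\int_{-\infty}^0 v|x|\diff x=\int_{-\infty}^0 v\diff x-\frac{1}{\alpha}\int_{-\infty}^0\frac{v}{s}\,\partial_x s^\alpha\diff x .
\]
I would then split the pressure term using $\frac{v}{s}\,\partial_x s^\alpha=\alpha\, v\, s^{\alpha-2}\partial_x s$, and estimate it via Young's inequality against $v^{2-\alpha}$ and the dissipation, using $v\le s$. The result should be that $2\int_{-\infty}^0 v|x|+2\alpha^2\int_0^t\int_{-\infty}^0 v$ controls the negative-half contribution to the second-moment growth from Step one, up to a term $2\int_0^t\int_{-\infty}^0 v^{2-\alpha}$.

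Conclusion is a dichotomy. Plug the lower bound of Step one into Step two. If the $v^{2-\alpha}$ term does not absorb the growth, which is case \eqref{eq:quant_bound_main_thm_log_one}, then the remaining terms must, and this yields \eqref{eq:quant_bound_main_thm_energy_one} with linear growth $C_2(1+t)$. The main obstacle I expect is the rigorous justification of the $|x|$ and $x^2$ tests across the interface $x=0$ where $s$ may vanish, together with handling $v/s$ there. I would do this by regularizing $\phi$ near $0$ and using $\partial_x s^{\alpha}=0$ a.e.\ on $\{s=0\}$. Getting the exact exponent $(\alpha+1)/(3\alpha)$ will require careful bookkeeping in the interpolation.
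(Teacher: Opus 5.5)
\textbf{Verdict: genuine gap.} Your Step~1 cannot hold, and your dichotomy has no quantity to pivot on.

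\textbf{What fails.}
\begin{itemize}
\item The bound $\int_0^t\int_{\R} s^{\alpha}\diff x\diff\tau\le C(1+\log(t+1))$ is false for large $t$. Mass is conserved and $\int_{\R}s\,|x|^2\diff x\le C(1+t^2)$ by Lemma~\ref{lem:quantitative_dependence_constants}. The very interpolation you invoke then gives $\int_{\R}s^{\alpha}\diff x\ge c\,\big(\int_{\R}s\,|x|^2\diff x\big)^{-(\alpha-1)/2}\ge c\,(1+t)^{1-\alpha}$, whose time integral grows like $t^{2-\alpha}$.
\item Nothing in your outline produces the exponent $(\alpha+1)/(3\alpha)$. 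It equals $1-\frac{2(\alpha-1)}{3\alpha}$ and comes from the growth $\|s\|_{L^{\infty}((0,t)\times\R)}\le C(1+t^{2/(3\alpha)})$ obtained by Alikakos iteration in Lemma~\ref{lem:quantitative_dependence_constants}, not from moment interpolation.
\item In Step~2, the identity for $\int_{-\infty}^0 v\,|x|\diff x$ has no dissipation of its own. Its pressure term $v\,\partial_x p(s)$ also has the wrong weight: $v^{2-\alpha}$ only arises from $v\,\partial_x s/s=\frac{2}{\alpha}\frac{v}{s^{\alpha/2}}\partial_x s^{\alpha/2}$, i.e.\ from testing with $\log s$.
\item Both terms on the right of that identity vanish as long as no mixing has occurred, so it carries no driving force that could compel mixing.
\item Minor: the second-moment identity actually reads $\frac{d}{dt}\int_{\R}s\,x^2\diff x=\frac{2}{\alpha}\int_{\R}s^{\alpha}\diff x+2\int_{\R}(u-v)\,x\diff x$. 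The pressure term in your half-line identity enters with a plus sign.
\end{itemize}

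\textbf{The missing idea.} The pivot is the interface trace $s(\tau,0)$, controlled from two sides by two different functionals.

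\emph{Entropy.} Take $F=\int_{\R}(u\log u+v\log v-\frac12 s\log s)\diff x$, handled rigorously as in Proposition~\ref{prop:trace_at_0_for_separated_solutions}. By the symmetry $u(t,x)=v(t,-x)$ and $u-v=s-2v$, the cross term becomes $-\int_0^t s(\tau,0)\diff\tau+\frac{4}{\alpha}\int_0^t\int_{-\infty}^0 \frac{v}{s^{\alpha/2}}\,\partial_x s^{\alpha/2}\diff x\diff\tau$. Young's inequality against the dissipation $\frac{2}{\alpha^2}|\partial_x s^{\alpha/2}|^2$ leaves $2v^2/s^{\alpha}\le 2v^{2-\alpha}$. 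This gives $F(t)-F(0)+\int_0^t s(\tau,0)\diff\tau\le 2\int_0^t\int_{-\infty}^0 v^{2-\alpha}\diff x\diff\tau$. The logarithm enters only through $F(t)-F(0)\ge -C_1(1+\log(t+1))$, which follows from Lemma~\ref{eq:ulogu_lower_bound_in_terms_of_2nd_moment} and the second-moment bound.

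\emph{Energy.} Take $E=\frac{1}{\alpha}\int_{\R}s^{\alpha}\diff x-\int_{\R}(u-v)\,x\diff x$.
\begin{itemize}
\item The drifts contribute the constant rate $-\int_{\R}s^0\diff x$ to $E'$; this is the driving force your outline lacks.
\item The pressure flux across $x=0$ contributes $2s^{\alpha}(t,0)$.
\item Young's inequality against $(\alpha-1)\int_{\R}s|\partial_x p(s)|^2\diff x$ turns the rest into a multiple of $\int_{-\infty}^0 v\diff x$.
\end{itemize}
With $E(t)\ge-2\int_{-\infty}^0 v\,|x|\diff x$, one obtains $t\int_{\R}s^0\diff x\le 2\int_{-\infty}^0 v(t,x)|x|\diff x+c_{\alpha}\int_0^t\int_{-\infty}^0 v\diff x\diff\tau+2\int_0^t s^{\alpha}(\tau,0)\diff\tau+E(0)$.

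\emph{Dichotomy.} Split according to whether $2\int_0^t s^{\alpha}(\tau,0)\diff\tau+E(0)+1\le\frac t2\int_{\R}s^0\diff x$.
\begin{itemize}
\item If so, \eqref{eq:quant_bound_main_thm_energy_one} follows directly.
\item If not, $s^{\alpha}(\tau,0)\le\|s\|_{L^\infty}^{\alpha-1}s(\tau,0)$ together with the $L^\infty$ growth gives $\int_0^t s(\tau,0)\diff\tau\ge C_3t^{(\alpha+1)/(3\alpha)}-C_4$. The entropy inequality then converts this into \eqref{eq:quant_bound_main_thm_log_one}.
\end{itemize}

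Your Step~2 is only a fragment of the energy argument: it lacks the term $\frac1\alpha\int s^\alpha$ and the $u$-drift. The entropy argument is absent altogether.
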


\begin{rem} Some precisions are needed.
    \begin{itemize}
        \item [i)] For sufficiently large times, both left-hand sides of \eqref{eq:quant_bound_main_thm_log_one}--\eqref{eq:quant_bound_main_thm_energy_one} are positive and are growing with some power-law rate in $t$. Denoting their right-hand sides by $R_1(t)$ and $R_2(t)$, respectively, we infer the estimate $\sup\{R_1(t), R_2(t)\} \gtrsim t^{(\alpha+1)/3\alpha}$ for sufficiently large $t$, which gives a quantitative measurement on mixing. We note that all constants are fully computable, and we have kept the same notation as in the proof to facilitate the corresponding calculations for interested readers. For small times, the first lower bound may be negative, while the second lower bound may fail to hold; therefore, it does not rule out segregation at short times.
        \item[ii)] The restriction $\alpha < 2$ does not seem easy to overcome. On the other hand, it should not be difficult to extend the proof to the regime $\alpha > 1/3$ by exploiting the fact that the energy $\int_{\R} s^{\alpha}\diff x$ is then controlled by the second moment $\int_{\R} s|x|^2 \diff x$. From our perspective, the regime $\alpha > 1$ is particularly interesting, since the densities are expected to propagate with finite speed, and establishing mixing estimates should therefore be more challenging.
        \item[iii)] The integrability of $v^{2-\alpha}$ follows from Lemma \ref{lem:arbitrary_high_moments} together with the compact support of the initial condition.
        \item[iv)] The symmetry condition $u(t,x)=v(t,-x)$ is expected to be satisfied for the mixing solution, since it is obtained as a limit of the regularized system \eqref{eq:general_cross_diffusion_intro_viscosity}. If the latter has a unique solution, then one can check that both $(\ueps(t,x), \veps(t,x))$ and $(\veps(t,-x), \ueps(t,-x))$ are solutions which, by uniqueness, implies that $\ueps(t,x)=\veps(t,-x)$. Passing to the limit, we infer that $u(t,x)=v(t,-x)$ a.e.
        \item[v)]
        Theorem~\ref{thm:quantitative_bounds} can also be applied to the mixing solutions in Corollary~\ref{cor:nonuniqueness_mixing}, which are constructed by gluing in time the non-mixing and mixing solutions from Theorem~\ref{thm:main_nonuniqueness}; however, the estimates in Theorem~\ref{thm:quantitative_bounds} are valid only for times after the gluing.
    \end{itemize}
\end{rem}

In the absence of advection, it is well known that, in one spatial dimension, sufficiently smooth solutions preserve segregation whenever the initial data are segregated \cite{MR2652018}. In particular, by passing to the limit in a family of smooth approximations, one obtains the existence of segregated weak solutions; see also \cite{MR3870087,MR4072681}. In higher dimensions, the existence of segregated weak solutions was established in \cite{MR4929608}. In contrast, our construction relies on the presence of advection, in the sense that the mixing solution is obtained as the limit of a~more regular approximation. More precisely, we consider the viscous regularization
\begin{equation}\label{eq:general_cross_diffusion_intro_viscosity}
\begin{split}
\partial_t \ueps &= \partial_x(\ueps\, \partial_x p(\seps)) - \partial_x \ueps  + \eps\, \partial^2_x \ueps,\\
\partial_t \veps &= \partial_x(\veps\, \partial_x p(\seps)) + \partial_x \veps  + \eps\, \partial^2_x \veps,
\end{split}
\end{equation}
with $\seps=\ueps+\veps$. In passing to the limit $\eps \to 0$, we will use the following result from \cite[Theorem 1.3]{skrzeczkowski2026global}:
\begin{thm}\label{thm:main}
Let $u^0$, $v^0$ be the initial data as in Theorem \ref{thm:main_nonuniqueness}.  There exists a regular weak solution $(u,v)$ to \eqref{eq:intro-toy-model-to-study-nonuniqueness} in the sense of Definition \ref{def:regular_weak_sln}. It is obtained as a limit of solutions to~\eqref{eq:general_cross_diffusion_intro_viscosity}. More precisely, there exists a subsequence (not relabelled) such that 
\begin{itemize}
\item $\ueps \to u$, $\veps \to v$ a.e., strongly in $L^p((0,T)\times\R)$ for all $p\in[1,\infty)$ as well as weakly$^*$ in $L^\infty((0,T)\times\R)$,
\item $\partial_x \seps^{{\alpha}} \to \partial_x s^{{\alpha}}$ strongly in $L^2((0,T)\times\R)$.
\end{itemize}
\end{thm}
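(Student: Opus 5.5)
The plan is the classical route: uniform-in-$\eps$ a priori estimates for the parabolic system \eqref{eq:general_cross_diffusion_intro_viscosity}, compactness of the total density, identification of the nonlinear fluxes, and finally an upgrade from weak to strong convergence of the individual species.

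\emph{Approximate solutions and uniform estimates.} I first mollify the data and add a small positive constant, so that $u^0_\eps, v^0_\eps$ are smooth and strictly positive. System \eqref{eq:general_cross_diffusion_intro_viscosity} is then uniformly parabolic, and standard theory (for instance a fixed point in H\"older spaces) gives global smooth positive solutions $(\ueps,\veps)$.

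Mass is conserved. For the Boltzmann entropy $\int_\R (\ueps\log\ueps+\veps\log\veps)\diff x$ the transport terms $\mp\partial_x\ueps$ give no contribution, since translations do not change entropy. Hence
\[
\frac{d}{dt}\int_\R (\ueps\log\ueps+\veps\log\veps)\diff x + \frac{4}{\alpha}\int_\R |\partial_x \seps^{\alpha/2}|^2\diff x + \eps\int_\R\Big(\frac{|\partial_x\ueps|^2}{\ueps}+\frac{|\partial_x\veps|^2}{\veps}\Big)\diff x = 0 .
\]
First moments grow at most linearly, because the velocity fields are $\pm1$ plus a dissipative part. Together these give a uniform lower bound on the entropy and the bounds $\partial_x\seps^{\alpha/2}\in L^2$, $\seps|x|\in L^\infty(0,T;L^1)$.

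For the $L^\infty$ bound I use the equation for the total density,
\[
\partial_t\seps=\partial_x(\seps\,\partial_x p(\seps))-\partial_x(\seps w_\eps)+\eps\,\partial_x^2\seps, \qquad w_\eps=\frac{\ueps-\veps}{\seps},\quad |w_\eps|\le 1 .
\]
This is a porous-medium/fast-diffusion equation with a bounded drift, so a Moser (or Alikakos) iteration applies. I test with $\seps^k$ and absorb $\int \seps\, w_\eps\,\partial_x\seps^k$ into the diffusion term $\int \seps^{k+\alpha-2}|\partial_x\seps|^2$ by Young's inequality. The leftover lower-order terms $\int\seps^{k+2-\alpha}$ are controlled by interpolation with mass. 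This yields $\|\seps\|_{L^\infty}\le C$, and the weighted maximum principle then also bounds $\ueps$ and $\veps$. Combining this with the $L^2$ bound on $\partial_x\seps^{\alpha/2}$ gives $\partial_x\seps^\alpha\in L^2$ uniformly.

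\emph{Compactness of $\seps$ and passage to the limit.} From the equation I bound $\partial_t\seps$ in $L^2(0,T;H^{-1})$, and I have the bound on $\partial_x\seps^\alpha$ in $L^2$. An Aubin--Lions--Dubinskii argument, with tightness coming from the moment bound, gives $\seps\to s$ a.e. and in every $L^p$. Strong $L^2$ convergence of $\partial_x\seps^\alpha$ follows from the energy equality obtained by testing with $\seps^{\alpha}$ (or $p(\seps)$): passing to the limit, I compare the limit of the dissipation with the dissipation of the limit, which excludes any defect.

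The fluxes are then easy to identify. Set $r_\eps=\ueps/\seps\in[0,1]$. Then $r_\eps\weaks r$, and $\ueps=r_\eps\seps\rightharpoonup rs=u$. The flux $\frac1\alpha r_\eps\,\partial_x\seps^\alpha$ is a product of a weak$^*$ convergent and a strongly $L^2$ convergent sequence, so it converges to $\frac1\alpha \frac{u}{s}\,\partial_x s^\alpha$ on $\{s>0\}$. On $\{s=0\}$ the limit flux vanishes because $\partial_x s^\alpha=0$ a.e. there. This already yields a regular weak solution.

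\emph{Strong convergence of $\ueps$ and $\veps$.} This is the main obstacle: there is no uniform gradient bound on the individual species, since the $r_\eps$ only satisfy a hyperbolic transport equation. I would first try a renormalization/defect-measure argument. Compute the evolution of $\int_\R \ueps^2/\seps\diff x=\int_\R \seps r_\eps^2\diff x$; at $\eps$ level this is transport of $r_\eps^2$ by the flux, with a nonnegative dissipation. Then show that the limit $(u,v)$ satisfies the corresponding identity with equality; this should follow from a DiPerna--Lions-type commutator estimate, since in one dimension the velocity $\partial_x p(s)\pm 1$ has enough integrability on $\{s>0\}$. Comparing the two identities gives $\overline{\seps r_\eps^2}=s r^2$, that is, $\ueps\to u$ strongly in $L^2$. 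Interpolation with the $L^\infty$ bound then gives strong convergence in every $L^p$, $p<\infty$, and a.e. convergence along a subsequence.
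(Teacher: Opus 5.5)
The paper does not prove this theorem. It imports it from \cite[Theorem~1.3]{skrzeczkowski2026global}, and the only thing it says about the mechanism is that, for general potentials, strong convergence of $\ueps,\veps$ holds only where $\partial_xV^1\neq\partial_xV^2$. So compactness of the individual species is driven by the fact that $u$ and $v$ are advected with \emph{different} speeds. Your argument never uses this; it would apply verbatim for $V^1=V^2$, where no such convergence is asserted. That is exactly where it breaks.

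Your a priori bounds and the compactness of $\seps$ are fine in substance. Minor corrections: the entropy dissipation constant is $4/\alpha^2$; the $|x|$-moment produces a term $\frac{2}{\alpha}\seps^\alpha(t,0)$ that must be absorbed or avoided with a smooth weight; and no maximum principle is needed, since $0\le\ueps,\veps\le\seps$.

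\textbf{Circularity.} Testing with $\seps^\alpha$ leaves $\int_0^T\int_\R(\ueps-\veps)\,\partial_x\seps^\alpha\diff x\diff t$ on the right-hand side. This is a product of two merely weakly convergent sequences, so you cannot compare the limit of the dissipation with the dissipation of the limit. This is not a technicality. Since $\partial_x\big(\tfrac1\alpha\partial_x\seps^\alpha-(\ueps-\veps)+\eps\partial_x\seps\big)=\partial_t\seps$ is compact in $H^{-1}_{loc}$, spatial oscillations of $\ueps-\veps$ are mirrored by oscillations of $\tfrac1\alpha\partial_x\seps^\alpha$, while $\seps$ still converges strongly. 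Strong convergence of $\partial_x\seps^\alpha$ is therefore essentially equivalent to the compactness of the species, not a consequence of compactness of $\seps$. Your flux identification, and the claim that this ``already yields a regular weak solution'', rest on it. Your last step then uses the limit equations, so the argument is circular.

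\textbf{The renormalization step fails.} Because of the opposite drifts, $r=u/s$ is not passively transported. With $b_s=2r-1-\partial_xp(s)$ one has $s(\partial_tr+b_s\partial_xr)=-2\partial_x\big(s\,r(1-r)\big)$, and for smooth positive solutions
\begin{equation*}
\frac{d}{dt}\int_\R\frac{u^2}{s}\diff x=4\int_\R s\,r(1-r)\,\partial_x r\diff x=-4\int_\R G(r)\,\partial_x s\diff x,\qquad G(r)=\frac{r^2}{2}-\frac{r^3}{3}.
\end{equation*}
This is an unsigned production term, not a dissipation. Its limit involves the weak limit of the nonlinear quantity $G(r_\eps)$ multiplied by a derivative of $\seps$ that converges at best weakly, which is precisely the information you are trying to obtain. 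A DiPerna--Lions commutator estimate needs $W^{1,1}_{loc}$ regularity of the velocity, i.e.\ control of $\partial_x^2p(s)$, which you do not have; integrability of $\partial_xp(s)$ is beside the point. So neither the ``equality for the limit'' nor the comparison of the two identities is justified.

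\textbf{What is missing.} You need a compactness mechanism that sees the velocity difference; compensated compactness is the natural tool. For instance, in $(t,x)$ the field $E_\eps=\big(\ueps,\,-\ueps(\partial_xp(\seps)-1)-\eps\partial_x\ueps\big)$ is divergence-free and $B_\eps=\big(\tfrac1\alpha\partial_x\seps^\alpha-\ueps+\veps+\eps\partial_x\seps,\,\seps\big)$ is curl-free. Both are bounded in $L^2$, and $E_\eps\cdot B_\eps=2\ueps\veps+\eps(\ueps\partial_x\veps-\veps\partial_x\ueps)$, where the factor $2=\partial_xV^2-\partial_xV^1$ is where the drifts enter. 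The div--curl lemma then links the defect $\overline{\ueps^2}-u^2$ to the defect of $\ueps\,\partial_x\seps^\alpha$, and the compactness argument still has to be closed from there. Only once $\ueps,\veps$ converge strongly do your energy argument for $\partial_x\seps^\alpha$ and your flux identification go through.
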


The version in \cite{skrzeczkowski2026global}, formulated for general velocity fields as in \eqref{eq:cross-diffusion-intro-general-velocities}, gives the strong convergences $\ueps \to u$, $\veps \to v$ only at the points where $\partial_x V^1 \neq \partial_x V^2$ which, in our case, coincides with $(0,T)\times\R$. Furthermore, the notion of regular weak solution is not used there but the regularity stated in \cite[Theorem 1.3]{skrzeczkowski2026global} coincides with the one required by Definition~\ref{def:regular_weak_sln}.

Let us give an idea of the proof. The non-mixing solution is constructed by solving the problem on two half-lines with Neumann boundary conditions, which prevent the solutions from leaving their respective half-lines; see Proposition \ref{prop:existence_non_mixing_sln}. The mixing solution is obtained through Theorem \ref{thm:main} as a limit of the viscous approximation. To prove that this solution indeed exhibits mixing, we study the quantity $\int_{\R} (u\log u + v\log v - s\log s) \diff x$. It is well-known that both $\int_{\R} (u \log u + v \log v) \diff x$ and $\int_{\R} s \log s \diff x$ are useful in the analysis of cross-diffusion systems \cite{elbar2025cross,skrzeczkowski2026global}. In Proposition~\ref{prop:trace_at_0_for_separated_solutions}, we prove that under the assumption that the limiting solution $(u,v)$ to \eqref{eq:general_cross_diffusion_intro_viscosity} stays segregated up to time $t$, we have
\begin{equation}\label{eq:intro_main_identity_to_get_contradiction}
\int_{\R} \big(u(t,x)\log u(t,x) + v(t,x)\log v(t,x) - s(t,x)\log s(t,x)\big) \diff x + 2 \int_0^t s(\tau,0) \diff \tau \leq 0.
\end{equation}
Hence, if $u$ and $v$ remain segregated up to time $t$, then $s(\tau,0) = 0$ for a.e.\ $\tau \in [0,t]$. We deduce from the identity
$$
\int_{-\infty}^0 u(t,x) \, x \diff x - \int_{-\infty}^0 u^0(x) \, x \diff x  = - \frac{1}{\alpha} \int_0^t s^{\alpha}(\tau,0) \diff \tau   + t\, \int_{-\infty}^0 u^0(x) \diff x,
$$
which holds whenever $u$ and $v$ remain segregated on $[0,t]$, that $\int_{-\infty}^0 u(t,x) \, x \diff x $ eventually becomes positive, yielding a contradiction. 

Several additional difficulties arise in the proof. In particular, the available regularity is insufficient to justify \eqref{eq:intro_main_identity_to_get_contradiction} directly and suitable approximation arguments are therefore required. For instance, the last term in~\eqref{eq:intro_main_identity_to_get_contradiction} would be easier to obtain if $\partial_x s$ existed. Since this is not known in general, one must instead exploit the weaker regularity provided by $\partial_x s^\alpha$ or $\partial_x s^{\frac{\alpha}{2}}$. We refer to Remark~\ref{rem:difficulty_if_arguing_when_s_smooth} for an explanation of why one cannot simply perform the computations at the level of $s_{\eps}$, which enjoys additional regularity, and then pass to the limit $\eps \to 0$. Moreover, we work on~$\R$, which requires some additional effort to control the behaviour at large arguments (one could instead work on $\Td$ but this would make the construction of the segregated solution more difficult). Finally, we prove that both solutions possess the same regularity, namely that they are regular weak solutions.

Last but not least, regarding the question if the mixing solution is indeed the preferred one among weak solutions, our work shows that the vanishing-viscosity limit from \cite{skrzeczkowski2026global} selects it. However, by Corollary~\ref{cor:nonuniqueness_mixing}, these solutions are not unique. An interesting open question in this direction is whether an additional condition can be found that selects the unique solution. A related issue arises in the context of the isentropic Euler equations of gas dynamics, where uniqueness and non-uniqueness results for various solution concepts have been shown \cite{GiKw22}.

\section{Proofs of Theorem \ref{thm:main_nonuniqueness}, Corollary~\ref{cor:nonuniqueness_mixing}, and Theorem~\ref{thm:quantitative_bounds}}

We first construct the solution that does not mix. 
\begin{proposition}\label{prop:existence_non_mixing_sln}
There exists a weak solution $(u_{A}, v_{A})$ to \eqref{eq:intro-toy-model-to-study-nonuniqueness} that stays segregated as stated in Theorem \ref{thm:main_nonuniqueness}.
\end{proposition}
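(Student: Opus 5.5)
The plan is to decouple the system by imposing a no-flux condition at $x=0$. If $u$ is supported on $(-\infty,0]$ and $v$ on $[0,\infty)$, then on $(-\infty,0)$ we have $s=u$ and $u/s=1$ where $u>0$. The first equation therefore becomes the scalar porous-medium-type equation with drift
\begin{equation*}
\partial_t u = \tfrac{1}{\alpha}\,\partial_{xx} u^{\alpha} - \partial_x u \quad \mbox{on } (-\infty,0),
\end{equation*}
and we impose the zero-flux condition $\frac{1}{\alpha}\partial_x u^\alpha - u = 0$ at $x=0$. By the symmetry $v^0(x)=u^0(-x)$ we then set $v(t,x):=u(t,-x)$, which solves the mirrored problem on $(0,\infty)$ with drift $+\partial_x v$ and zero-flux condition. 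Extending $u$ by $0$ to $x>0$ and $v$ by $0$ to $x<0$ gives the candidate $(u_A,v_A)$. Segregation holds by construction.

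\textbf{Step 1: existence on the half-line.} I will regularize by adding $\eps\,\partial_{xx}u$, replacing $u^\alpha$ by a nondegenerate, nonsingular approximation (this covers both $\alpha>1$ and the fast-diffusion range $\alpha\le 1$), and smoothing the data. I will also truncate to $(-R,0)$ with the same no-flux condition at $0$ and a Neumann-type condition at $-R$. This yields classical solutions, which I then pass to the limit using these a priori estimates:
\begin{itemize}
\item mass conservation and nonnegativity;
\item an $L^\infty$ bound from the comparison principle, using the sign of the drift together with the no-flux boundary;
\item the first-moment bound $\int u|x|\,dx$, obtained by testing with $|x|$, since the drift only pushes toward $0$;
\item the estimate $\partial_x u^{\alpha/2}\in L^2$ from the entropy $\int u\log u$ (for $\alpha\neq1$ one could also use $\int u^\alpha$). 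The drift term contributes only a boundary term at $0$ and is controlled via the no-flux condition.
\end{itemize}
Testing with $u^{\alpha}$ together with the $L^\infty$ bound also gives $\partial_x u^\alpha\in L^2$. Aubin--Lions compactness on $u^{\alpha}$ (or $u^{\alpha/2}$) then gives a.e. convergence, and we pass to the limit in the weak formulation with test functions on $(-\infty,0]$ that are not required to vanish at $0$. This encodes the zero-flux condition.

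\textbf{Step 2: checking Definition \ref{def:weak_sol} on all of $\R$.} For $\varphi\in C_c^\infty([0,T)\times\R)$, restrict $\varphi$ to $(-\infty,0]$. The half-line weak formulation with natural boundary condition is exactly \eqref{eq:weak_form_original_form_u}, because $u=0$ for $x>0$ and $\frac{u}{s}\partial_x s^\alpha=\partial_x u^\alpha$ a.e. on $x<0$. The equation for $v$ is identical by reflection. One also needs $\partial_x s^\alpha\in L^2(\R)$ globally. Since $s^\alpha$ may jump at $0$ if the traces $u(t,0^-)$ are positive, I will interpret $\partial_x s^\alpha$ as the a.e. derivative on $\R\setminus\{0\}$, consistent with \eqref{eq:pressure_term_rewritten_derivative_intro}. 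Because $u(t,0^-)=v(t,0^+)$ by symmetry, $s$ is in fact continuous across $0$, so $s^\alpha\in H^1_{loc}$ genuinely holds. This symmetry is what makes the construction legitimate.

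\textbf{Main obstacle.} I expect the main obstacle to be the uniform estimates in the fast-diffusion/singular range $\alpha\le1$ on the unbounded half-line, in particular obtaining $\partial_x u^{\alpha/2}\in L^2$ and tightness uniformly in $R,\eps$. I will handle tightness with the first-moment bound, and the gradient estimate with the $\int u\log u$ (or $\int u^\alpha$) entropy, bounded from below using the moment bound.
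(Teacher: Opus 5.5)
Your construction matches the paper's. You solve the half-line problem with zero flux $\frac1\alpha\partial_x u^\alpha-u=0$ at $x=0$, reflect to get $v$, and extend by zero. You then use $u(t,0^-)=v(t,0^+)$ to get $\partial_x s^{\alpha/2},\partial_x s^\alpha\in L^2((0,T)\times\R)$ and the weak formulation on $\R$.

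The gap is in the a priori estimates, above all the $L^\infty$ bound. Your $\partial_x u^\alpha$ bound, and hence even the requirement $\partial_x s^\alpha\in L^2$ of Definition~\ref{def:weak_sol}, rests on it. There is no maximum principle here. The drift $-\partial_x u$ transports mass to the right, i.e.\ \emph{into} the wall. There the zero-flux condition forces $\partial_x p(u)=1$ (where $u(t,0)>0$), so mass piles up at $x=0$. A constant $c>0$ is not a supersolution, since its flux at the wall is $-c\neq0$. Quantitatively, testing with $|x|$ gives
\begin{equation*}
\frac{d}{dt}\int_{-\infty}^0 u\,|x|\diff x=\frac1\alpha\,u^\alpha(t,0)-M ,
\end{equation*}
hence $\frac1\alpha\int_0^t u^\alpha(\tau,0)\diff\tau\ge Mt-\int_{-\infty}^0 u^0|x|\diff x$. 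For $u^0=\mathds{1}_{[-L,0]}$ with $L>1/\alpha$, this contradicts $u\le\|u^0\|_{L^\infty}=1$ for large $t$.

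The same identity shows that your justification of the moment bound (``the drift only pushes toward $0$'') ignores the diffusive trace $\frac1\alpha u^\alpha(t,0)$. Likewise, in the entropy identity $\frac{d}{dt}\int u\log u=-\frac4{\alpha^2}\int|\partial_x u^{\alpha/2}|^2+u(t,0)$, the drift contributes the positive trace $u(t,0)$. The zero-flux condition does not control it.

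The missing idea, used in Appendix~\ref{app:estimates}, is to bound the trace by the dissipation:
\begin{equation*}
\frac{2}{\alpha+1}\,u^{\frac{\alpha+1}{2}}(t,0)=\frac{2}{\alpha}\int_{-\infty}^0 u^{1/2}\,\partial_x u^{\alpha/2}\diff x\le\frac2\alpha\,M^{1/2}\,\|\partial_x u^{\alpha/2}(t)\|_{L^2(\R^-)} .
\end{equation*}
One then absorbs $u(t,0)$ and $u^\alpha(t,0)$ via Young's inequality into the entropy dissipation. This gives the moment and $\partial_x u^{\alpha/2}$ bounds for every $\alpha>0$ without any $L^\infty$ information. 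The $L^\infty$ bound then follows from Alikakos iteration on $\int u^k$, where the wall term $(k-1)u^k(t,0)$ is again absorbed by Young. Finally, $\partial_x u^\alpha=2u^{\alpha/2}\partial_x u^{\alpha/2}\in L^2$.

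A comparison argument can be salvaged for $\alpha\ge1$ by comparing with the stationary zero-flux profiles $p(\bar u)=x+C$. These increase toward the wall and can be placed above $u^0$. For $\alpha<1$, however, they satisfy $\bar u(x)\le((1-\alpha)|x|)^{-1/(1-\alpha)}$ and cannot dominate general data, so that route does not cover the full range $\alpha>0$.
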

\begin{proof}
Let $u$ be the unique solution to 
\begin{equation}\label{eq:Neumann_problem_on_a_half_space}
\partial_t u = \partial_x(u \, \partial_x p(u)) - \partial_x u = \frac{1}{\alpha} \partial^2_x u^{\alpha} - \partial_x u
\end{equation}
on $[0,T] \times (-\infty,0]$ with Neumann boundary condition $u \, \partial_x p(u) - u = 0$ at $x = 0$ and the initial condition $u^0$. Clearly, due to the boundary condition, the solution stays supported on $(-\infty, 0]$ (to see this, one can for instance compute $\partial_t \int_{-\infty}^0 u \diff x = 0$). We claim that $u$~has the following regularity
\begin{align}\label{eq:regularity_half_line_solution_u}
& u \in L^{\infty}(0,T; L^1(\R^-)) \cap L^{\infty}(0,T; L^{\infty}(\R^-)), \quad u\,|x| \in L^{\infty}(0,T; L^1(\R^{-})), \\
\label{eq:regularity_half_line_solution_u_2}
& \partial_x u^{\frac{\alpha}{2}}, \partial_x u^{\alpha} \in L^2((0,T)\times \R^-),
\end{align}
where $\R^- = (-\infty,0]$. This is very similar to \cite[Section~3.1]{skrzeczkowski2026global} but some changes are necessary due to the Neumann boundary condition. We provide a detailed computation in Appendix~\ref{app:estimates}.

Having established the regularity \eqref{eq:regularity_half_line_solution_u}--\eqref{eq:regularity_half_line_solution_u_2}, we introduce the function $v(x) = u(-x)$, which solves the PDE
$$
\partial_t v = \partial_x(v \, \partial_x p(v)) + \partial_x v = \frac{1}{\alpha} \partial^2_x v^{\alpha} + \partial_x v
$$
on $[0,T]\times [0,\infty)$ with initial condition $v^0(x) = u^0(-x)$ and the Neumann boundary condition. It follows that $v$ also satisfies \eqref{eq:regularity_half_line_solution_u}--\eqref{eq:regularity_half_line_solution_u_2} on $\R^+ := [0,\infty)$.

We now define $s=u+v$. We claim that $\partial_x s^{\alpha/2} = \partial_x u^{\alpha/2}\, \mathds{1}_{\R^-} + \partial_x v^{\alpha/2} \,\mathds{1}_{\R^+} \in L^2((0,T)\times\R)$ (and analogously for $\partial_x s^{\alpha}$). The integrability is clear from \eqref{eq:regularity_half_line_solution_u_2}. To see the differentiability, we compute for any $\varphi \in C_c^{\infty}((0,T)\times\R)$,
\begin{multline*}
\int_{0}^T \int_{\R} \partial_x \varphi \, s^{\alpha/2} \diff x \diff t = \int_{0}^T \int_{\R^-} \partial_x \varphi \, u^{\alpha/2} \diff x \diff t + \int_{0}^T \int_{\R^+} \partial_x \varphi \, v^{\alpha/2} \diff x \diff t \\
= - \int_{0}^T \int_{\R} \varphi\,(\partial_x u^{\alpha/2}\, \mathds{1}_{\R^-} + \partial_x v^{\alpha/2} \,\mathds{1}_{\R^+}) \diff x \diff t + \int_0^T \varphi(t,0)\,(u^{\alpha/2}(t,0) - v^{\alpha/2}(t,0)) \diff t.
\end{multline*}
The last term vanishes because the traces of $u^{\alpha/2}$ and $v^{\alpha/2}$ at $x=0$ have to be the same by construction ($u(t,x)=v(t,-x)$). It follows that the pair $u, v$ with $s=u+v$ has all the regularity required from a regular weak solution (Definition \ref{def:regular_weak_sln}).

Finally, we have to establish the weak formulation \eqref{eq:weak_form_original_form_u}--\eqref{eq:weak_form_original_form_v}. We note that the solution to \eqref{eq:Neumann_problem_on_a_half_space} with Neumann boundary condition satisfies
\begin{equation}\label{eq:weak_formulation_Neumann_problem_half_line}
\int_0^T \int_{\R^-} u\, \partial_t \varphi \diff x \diff t + \int_{\R^-} u^0(x) \, \varphi(0,x) \diff x = \int_0^T \int_{\R^-} \left(\frac{1}{\alpha} \, \partial_x u^{\alpha} - u \right)\, \partial_x \varphi \diff x \diff t,
\end{equation}
for all $\varphi \in C_c^{\infty}([0,T)\times (-\infty, 0])$. For $x\in\R^-$, we write $ \partial_x u^{\alpha} =  \frac{u}{s}\,\partial_x s^{\alpha}$ (recall that $\partial_x u^{\alpha}=0$ when $u=s=0$ by the estimate on $\partial_x u^{\frac{\alpha}{2}}$). We note that by extending $u=0$ for $x\in \R^+$, $\frac{u}{s}\,\partial_x s^{\alpha} = 0$ for $x \in \R^+$ (when $s>0$, $u/s = 0$ while when $s=0$, $u/s$ is defined to be zero). Hence, we can extend the identity \eqref{eq:weak_formulation_Neumann_problem_half_line} for $\varphi \in C_c^{\infty}([0,T)\times \R )$ concluding the proof of \eqref{eq:weak_form_original_form_u}. Analogous argument works for $v$ to derive \eqref{eq:weak_form_original_form_v}, concluding the proof.
\end{proof}

We now turn to the construction of the mixing solution. As a preliminary step in the contradiction argument, we show that if the limit of \eqref{eq:general_cross_diffusion_intro_viscosity} stays segregated, then $s(t,0) = 0$ for a.e.~$t$.  

\begin{rem}[Definition of $s(t,0)$]\label{rem:def_s_at_x_0} By Theorem \ref{thm:main}, the limit  pair $(u,v)$ is a weak solution. Hence, $\partial_x s^{\alpha} \in L^2((0,T)\times\R)$, which implies that the map $x\mapsto s(t,x)^{\alpha}$ is locally $\frac{1}{2}$-Hölder continuous for a.e.\ $t$. It follows that $x \mapsto s(t,x)$ is also continuous (note that $s(t,x)\geq 0$). Note however that $u$ and $v$ are defined on up to sets of measure zero, so the equality $s = u + v$ holds only a.e.\ as well. Consequently, $s(t,0)$ is understood as the value at $x=0$ of the unique continuous representative of $s(t,\cdot)$, rather than the pointwise sum $u(t,0)+v(t,0)$; see Figure~\ref{fig:sep_species} for an example.
\end{rem} 

\begin{figure}
\centering
\begin{tikzpicture}
\begin{axis}[
    axis lines=middle,
    axis line style={gray},
    tick style={gray},
    xmin=-1.6, xmax=1.6,
    ymin=-0.18, ymax=1.18,
    xtick={-1,0,1},
    ytick={0,1},
    width=7cm,
    height=5cm,
    clip=false,
    legend style={
        draw=none,
        fill=none,
        font=\small,
        at={(0.97,0.97)},
        anchor=north east
    }
]

\def\eps{0.005}

\addplot[thick,solid,forget plot] coordinates {(-1.6,-\eps) (-1,-\eps)};
\addplot[thick,solid,forget plot] coordinates {(-1,0) (0,1-\eps)};
\addplot[thick,solid,forget plot] coordinates {(0,-\eps) (1.6,-\eps)};

\addplot[thick,dashed,forget plot] coordinates {(-1.6,\eps) (0,\eps)};
\addplot[thick,dashed,forget plot] coordinates {(0,1+\eps) (1,0)};
\addplot[thick,dashed,forget plot] coordinates {(1,0) (1.6,\eps)};

\addlegendimage{thick,solid}
\addlegendentry{$u$}

\addlegendimage{thick,dashed}
\addlegendentry{$v$}

\end{axis}
\end{tikzpicture}
\caption{A segregated stationary solution for $\alpha = 2$ with initial conditions $u^0$, $v^0$ such that $\int_{\R} u^0(x) \diff x = \int_{\R} v^0(x) \diff x = \frac{1}{2}$. Both species have a jump at $x=0$. However, $s = u + v$ has the unique continuous representative and $s(t,0)$ is its value at $x=0$.}
\label{fig:sep_species}
\end{figure}

\begin{proposition}\label{prop:trace_at_0_for_separated_solutions}
Let $(u, v)$ be any limit of \eqref{eq:general_cross_diffusion_intro_viscosity} as $\eps \to 0$ given by Theorem \ref{thm:main} with initial conditions $u^0, v^0$ as in Theorem \ref{thm:main_nonuniqueness}. Let $s = u + v$. If $t^*$ is such that $\int_0^{\infty}u(t,x)\diff x = \int_{-\infty}^0 v(t,x)dx=0$ for a.e. $t \in [0,t^*]$ (i.e. if $u$ and $v$ stay segregated), then $s(t,0) = 0$ for a.e.\ $t \in [0,t^*]$, where $s(t,0)$ is defined in Remark \ref{rem:def_s_at_x_0}. 
\end{proposition}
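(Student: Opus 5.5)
The plan is to establish the inequality \eqref{eq:intro_main_identity_to_get_contradiction} for the limit pair $(u,v)$ on $[0,t^*]$. Segregation then makes the mixing entropy vanish, which forces $s(\cdot,0)=0$. The entropy density $\Phi(u,v)=u\log u+v\log v-s\log s$ is nonpositive and equals zero exactly when $uv=0$. Hence, if segregation holds on $[0,t^*]$, the first term in \eqref{eq:intro_main_identity_to_get_contradiction} vanishes at a.e.\ $t$. We are left with $\int_0^t s(\tau,0)\diff\tau\le 0$, and therefore $s(\tau,0)=0$ a.e.\ on $[0,t^*]$. All the work lies in proving that inequality.

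Step 1 derives the entropy balance at the $\eps$-level. For the viscous system \eqref{eq:general_cross_diffusion_intro_viscosity} we compute $\frac{d}{dt}\int\Phi(\ueps,\veps)\diff x$. Write $r_\eps=\ueps/\seps$. The pressure term cancels completely in the combination $\log u-\log s$, $\log v-\log s$, since $\sum u\,\partial_x p\cdot\partial_x\log(u/s)=\partial_x p\,\partial_x s-\partial_x p\,\partial_x s=0$. The viscous term produces $-\eps\int \seps|\partial_x r_\eps|^2/(r_\eps(1-r_\eps))\le0$. The advection terms give
\[
\int\big(\ueps\,\partial_x\log(\ueps/\seps)-\veps\,\partial_x\log(\veps/\seps)\big)\diff x=\int \seps\,\partial_x(r_\eps-(1-r_\eps))\diff x=2\int\seps\,\partial_x r_\eps\diff x .
\]
Integrating in time, this yields
\[
\int\Phi(t)\diff x\le \int\Phi(0)\diff x+2\int_0^t\!\!\int \seps\,\partial_x r_\eps .
\]
Since the initial data are segregated, $\int\Phi(0)\diff x=0$.

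Step 2 passes to the limit and identifies the boundary term. The term $2\int\seps\,\partial_x r_\eps=-2\int r_\eps\,\partial_x\seps$ cannot be passed to the limit directly, because $\partial_x s$ is not controlled (cf.\ Remark~\ref{rem:difficulty_if_arguing_when_s_smooth}). I would therefore not pass to the limit in this form. Instead I localize: split $\R$ with a smooth cutoff $\chi_\delta$ approximating $\mathds 1_{x<0}$, and test the weak formulation \eqref{eq:weak_form_H-1_u} of the limit equation with functions built from $\log$ of the limit densities. More concretely, I would work directly with the limit $(u,v)$ under the segregation assumption. On $x<0$ we have $u=s$ and $v=0$, on $x>0$ we have $v=s$, and $u/s$ jumps from $1$ to $0$ at $x=0$. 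So, formally, $-2\int r\,\partial_x s=2\int s\,\partial_x r$ becomes $2\int s\,\partial_x r=-2s(t,0)$, producing exactly the term $2\int_0^t s(\tau,0)\diff\tau$ with the right sign.

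Step 3 makes Step 2 rigorous with the available regularity. I would regularize $r=u/s$ by $r_\delta=\mathds 1_{x<0}*\rho_\delta$, or use directly $\psi_\delta(x)$, a smooth decreasing profile from $1$ to $0$ on $[-\delta,\delta]$. The $\eps$-flux term involving $\chi_\delta$ converges thanks to the strong convergences $\ueps\to u$ and $\partial_x\seps^\alpha\to\partial_x s^\alpha$ from Theorem~\ref{thm:main}. The quantity $\int_0^t\int s\,\psi_\delta'$ tends to $-\int_0^t s(\tau,0)$ by continuity of $s(\tau,\cdot)$ (Remark~\ref{rem:def_s_at_x_0}) and dominated convergence, using the $L^\infty$ bound. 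Lower semicontinuity of the convex functional $-\int\Phi$ is not the right direction. However, $\Phi(\ueps,\veps)\to\Phi(u,v)$ strongly in $L^1_{loc}$ by the a.e.\ convergence and the bounds, and the tails are controlled by the moment bound $u|x|\in L^\infty L^1$.

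\textbf{Main obstacle.} The hardest point is reconciling the $\eps$-level quantity $\int\seps\,\partial_x r_\eps$, which is an interior term and not a trace, with the limiting trace $s(\tau,0)$. My first attempt would be to replace $\Phi$ by the localized functional $\int(\Phi(u,v)+$ correction$)$ and use the identity $\seps\,\partial_x r_\eps=\partial_x\ueps-r_\eps\,\partial_x\seps$. Then I would obtain only an inequality in the limit, by dropping the nonpositive viscous dissipation together with the gap between $\lim\int\seps\,\partial_x r_\eps$ and $-s(\tau,0)$. Showing that this gap has the favourable sign, using that $r_\eps$ is confined to $[0,1]$ and tends to the step function, is where I expect the real difficulty.
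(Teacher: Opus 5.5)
Your reduction is sound: $\Phi\le 0$ vanishes under segregation, and your $\eps$-level computation is correct. The pressure terms do cancel in $\partial_u\Phi=\log(u/s)$, $\partial_v\Phi=\log(v/s)$, and the advection yields $2\int\seps\,\partial_x r_\eps\diff x$. But there is a genuine gap at exactly the step you flag as the main obstacle. Nothing in the proposal establishes $\limsup_{\eps\to0}2\int_0^t\int\seps\,\partial_x r_\eps\diff x\diff\tau\le-2\int_0^t s(\tau,0)\diff\tau$, and it is not clear how any sign argument could deliver it.

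At the $\eps$-level the densities are not segregated, so the hypothesis that turns the cross term into a trace is unavailable there. The term $-2\int r_\eps\,\partial_x\seps$ involves $\partial_x\seps$, which is not controlled uniformly in $\eps$ by the available bounds. For $\alpha>2$ the factor $\seps^{1-\alpha/2}$ in $\partial_x\seps=\frac{2}{\alpha}\seps^{1-\alpha/2}\partial_x\seps^{\alpha/2}$ is unbounded near vacuum, and $r_\eps$ converges only on $\{s>0\}$. The confinement $0\le r_\eps\le 1$ gives $r_\eps-\mathds{1}_{x<0}\le 0$ on $x<0$ and $\ge 0$ on $x>0$, but $\partial_x\seps$ has no sign, so no favourable sign emerges. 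Cancelling the pressure at the $\eps$-level does sidestep the issue of Remark~\ref{rem:difficulty_if_arguing_when_s_smooth}, but it moves the whole difficulty into this non-segregated cross term.

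Your alternative of testing the limit equations with logarithms of the limit densities also fails. Under segregation $u/s$ jumps from $1$ to $0$ and $\log(u/s)=-\infty$ on a half-line, so it is not an admissible $L^2(0,T;H^1(\R))$ test function.

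The missing idea is to split $\Phi=(u\log u+v\log v)-s\log s$ and treat the two pieces at different levels, each in the direction in which the limit is favourable.

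For the individual entropies the advection contributes nothing, since $\int\log\ueps\,\partial_x\ueps\diff x=0$. Hence $\int(\ueps\log\ueps+\veps\log\veps)(t)\diff x+\frac{4}{\alpha^2}\int_0^t\int|\partial_x\seps^{\alpha/2}|^2\diff x\diff\tau\le\int(u^0\log u^0+v^0\log v^0)\diff x$. Both terms on the left are lower semicontinuous as $\eps\to0$.

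For $\int s\log s$, work directly on the limit equation \eqref{eq:weak_formulation_for_s} with the test function $\log\max(s,\delta)\,\psi_R$. It depends only on $s$ and lies in $L^2(0,T;H^1(\R))$ because $\partial_x s^\alpha\in L^2$. The chain rule in $H^{-1}$ together with lower semicontinuity at $t=0$ gives the reverse inequality. Segregation now yields $(u-v)\frac{\partial_x s}{s}\mathds{1}_{s>\delta}=\mathrm{sgn}(-x)\,\partial_x\max(s,\delta)$. Via the continuous representative, this integrates, up to cutoff terms, to $2\max(s(\tau,0),\delta)$.

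The dissipation $\frac{4}{\alpha^2}\int_0^t\int|\partial_x s^{\alpha/2}|^2\diff x\diff\tau$ appears in both inequalities; in the second it holds with equality, by dominated convergence in $\delta$. It cancels on subtraction, leaving $\int_0^t s(\tau,0)\diff\tau\le 0$.
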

\begin{proof} We divide the proof into three steps.

\underline{Step 1: Estimate on $\int_{\R} (u(t,x) \log u(t,x) + v(t,x) \log v(t,x)) \diff x$.} We claim that
\begin{equation}\label{eq:entropy_individual_inequality_the_limit}
\begin{aligned}
\int_{\R} &\big(u(t,x) \log u(t,x) + v(t,x) \log v(t,x)\big) \diff x + \frac{4}{\alpha^2} \int_0^t \int_{\R}  |\partial_x s^{\frac{\alpha}{2}}|^2 \diff x \diff \tau \\
&\leq \int_{\R} \big(u^0(x) \log u^0(x) + v^0(x) \log v^0(x)\big) \diff x.
\end{aligned}
\end{equation}
For the proof, we multiply the PDEs in \eqref{eq:general_cross_diffusion_intro_viscosity} by $\log \ueps$ and $\log \veps$ respectively, integrate by parts, and sum up to obtain
$$
\partial_t \int_{\R} \big(\ueps \log \ueps + \veps \log \veps\big) \diff x + \frac{4}{\alpha^2} \int_{\R} |\partial_x \seps^{\alpha/2}|^2 \diff x + \eps \int_{\R} \left(\frac{|\partial_x \ueps|^2}{\ueps} + \frac{|\partial_x \veps|^2}{\veps}\right) \diff x \leq 0.
$$
A rigorous justification of this inequality is standard and proceeds by introducing suitable regularizations together with lower semicontinuity arguments, relying on the fact that the viscous terms $\eps\, \partial^2_x \ueps$, $\eps\,\partial^2_x \veps$ guarantee the existence of $\partial_x \ueps$ and $\partial_x \veps$. We now integrate the inequality in time and disregard the viscosity terms:
\begin{equation}\label{eq:entropy_before_sending_eps_to_zero}
\begin{split}
\int_{\R} &\big(\ueps(t,x) \log \ueps(t,x) + \veps(t,x) \log \veps(t,x)\big) \diff x + \frac{4}{\alpha^2} \int_0^t \int_{\R} |\partial_x \seps^{\alpha/2}|^2 \diff x \diff \tau \\
&\leq \int_{\R} \big(u^0(x) \log u^0(x) + v^0(x) \log v^0(x)\big) \diff x.
\end{split}
\end{equation}
We want to pass to the limit $\eps \to 0$ in \eqref{eq:entropy_before_sending_eps_to_zero}. It is a standard calculus inequality (see, e.g. \cite[Lemma D.2]{carrillo2024stein}) that for any $\rho:\R \to [0,\infty)$,
\begin{equation}\label{eq:algebraic_inequalities_on_rho_log_rho}
\rho\log \rho + \rho\, |x| + \frac{2}{e} \, e^{-|x|/2} \geq 0.
\end{equation}
Moreover, we know that $\{\seps\,|x|\}_{\eps\in(0,1)}$ is uniformly bounded in $L^{\infty}(0,T; L^1(\R))$ \cite[Lemma 3.3]{skrzeczkowski2026global}. Proceeding to the limit $\eps\to0$, \eqref{eq:entropy_before_sending_eps_to_zero} and \eqref{eq:algebraic_inequalities_on_rho_log_rho} imply that $\{\partial_x \seps^{\alpha/2}\}_{\eps\in(0,1)}$ is bounded in $L^2((0,T)\times\R)$ so that together with the strong convergence of $\{\seps\}_{\eps\in(0,1)}$ from Theorem \ref{thm:main}, we deduce that $\partial_x \seps^{\alpha/2} \weak \partial_x s^{\alpha/2}$ weakly in $L^2((0,T)\times\R)$. In particular, $\int_0^t \int_{\R} |\partial_x s^{\alpha/2}|^2 \diff x \diff \tau \leq \liminf_{\eps\to0}\int_0^t \int_{\R} |\partial_x \seps^{\alpha/2}|^2 \diff x \diff \tau$. For the logarithmic terms, $\ueps \to u$ and $\veps \to v$ strongly in $L^1((0,T)\times\R)$ by Theorem~\ref{thm:main}, so we can extract a~subsequence such that
\begin{align*}
& \ueps(t,x) \to u(t,x), \quad 
\veps(t,x)\to v(t,x), \\ 
& u_{\eps}(t,x) \, |x|^{1/2} \to u(t,x)\,|x|^{1/2}, \quad v_{\eps}(t,x) \, |x|^{1/2} \to v(t,x)\,|x|^{1/2}
\end{align*}
strongly in $L^1(\R)$ for a.e.\ $t$. The latter two are possible since, for instance, $\ueps(t,x) \to u(t,x)$ in $L^{1}(\R)$ and $\{\ueps(t,x) \,|x|\}$ is bounded in $L^1(\R)$ for a.e.\ $t$, thus one obtains the desired convergence by splitting $\R$ into a ball $B_R$ and $\R\setminus B_R$, passing first to the limit $\eps\to0$, and then letting $R\to \infty$. By writing
\begin{equation}\label{eq:adding_exp_to_make_fcnal_lower_semicontinuous}
\int_{\R} u_{\eps}(t,x)\log u_{\eps}(t,x) \diff x = \int_{\R} \frac{u_{\eps}(t,x)}{e^{-|x|^{1/2}}}\log \frac{u_{\eps}(t,x)}{e^{-|x|^{1/2}}} (e^{-|x|^{1/2}}\diff x) - \int_{\R} u_{\eps}(t,x)\,|x|^{1/2} \diff x,
\end{equation}
we deduce that
$$
\int_{\R} u(t,x)\log u(t,x) \diff x \leq \liminf_{\eps\to0}\int_{\R} u_{\eps}(t,x)\log u_{\eps}(t,x) \diff x
$$
by the lower semicontinuity of the relative entropy with the measure $e^{-|x|^{1/2}}\diff x$ \cite[Theorem~2.34]{MR1857292} and the convergence $\ueps(t,x)\,|x|^{1/2} \to u(t,x)\,|x|^{1/2}$. As the same inequality is satisfied for $v$, we obtain \eqref{eq:entropy_individual_inequality_the_limit} from~\eqref{eq:entropy_before_sending_eps_to_zero}.

\underline{Step 2: Estimate on $\int_{\R} s(t,x) \log s(t,x) \diff x$.} Under the assumption that $u,v$ stays segregated up to time $t$, we will prove that
\begin{equation}\label{eq:step_2_proof_inequality_on_slogs}
\int_{\R} s(t,x) \log s(t,x) \diff x + \frac{4}{\alpha^2} \int_0^t \int_{\R} |\partial_x s^{\alpha/2}|^2 \diff x \diff \tau  - 2 
\int_0^t s(\tau,0) \diff \tau \geq \int_{\R} s^0(x) \log s^0(x) \diff x,
\end{equation}
where $s(\tau,0)$ is defined in Remark \ref{rem:def_s_at_x_0}. The idea is to consider the PDE for $s$,
$$
\partial_t s = \partial_x(s\,\partial_x p(s)) -\partial_x u + \partial_x v,
$$
multiply it by $\log s$, and integrate by parts. We formally find that 
\begin{equation}\label{eq:entropy_of_the_sum_with_additional_nice_term}
\begin{split}
\int_{\R} s(t,x) \log s(t,x)\diff x &+ \frac{4}{\alpha^2} \int_0^t \int_{\R} |\partial_x s^{\frac{\alpha}{2}}|^2 \diff x \diff \tau - \int_0^t \int_{\R} \frac{(u-v)\, \partial_x s}{s} \diff x \diff \tau \\
&= \int_{\R} s^0(x) \log s^0(x)\diff x.
\end{split}
\end{equation}
Notice that $s = u$ for a.e. $x\leq 0$ and $s = v$ for a.e. $x \geq 0$. We formally deduce that
\begin{equation}\label{eq:example_how_to_do_smooth_slns_trace_at_0}
\int_0^t \int_{\R} \frac{(u-v)\, \partial_x s}{s} \diff x \diff \tau = \int_0^t \int_{-\infty}^0 \partial_x s \diff x \diff \tau - \int_0^t \int_{0}^\infty \partial_x s \diff x \diff \tau = 2 
\int_0^t s(\tau,0) \diff \tau,
\end{equation}
so we arrive at \eqref{eq:step_2_proof_inequality_on_slogs}. However, the argument is not rigorous for two reasons: first, $\partial_x s$ does not exist (we only have $\partial_x s^{\alpha}$ and $\partial_x s^{\frac{\alpha}{2}}$ by Theorem \ref{thm:main}); second, $\log s$ is not an admissible test function. Moreover, one has to be careful, since the required direction of the inequality is incompatible with lower semicontinuity arguments.

We now make the argument rigorous. By Theorem \ref{thm:main}, the pair $(u, v)$ satisfies Definition \ref{def:weak_sol}. We consider the weak formulation for $s=u+v$, obtained by summing the weak formulations \eqref{eq:weak_form_H-1_u}--\eqref{eq:weak_form_H-1_v}, which reads as
\begin{equation}\label{eq:weak_formulation_for_s}
    \int_0^T \langle \partial_t s, \varphi \rangle_{H^{-1}(\R), H^1(\R)} \diff t = - \int_0^T \int_{\R} \left(\frac{1}{\alpha}\, \partial_x s^{\alpha} - u + v  \right)\, \partial_x \varphi \diff x \diff t.
\end{equation}
Note that when summing up the formulations, we used the property $u/s + v/s = 1$. This is not true when $s = 0$ because then, by our definition, $u/s = v/s = 0$. However, if $s = 0$, then $s^{\alpha} = 0$ and also $ \partial_x s^{\alpha} = 0$ a.e.\ on $\{s = 0\}$.

Let $\delta>0$ and let $L_{\delta}(s) = \max(s,\delta)$. The primitive function of $\log L_{\delta}(s)$ equals
$$
\widetilde{L}_{\delta}(s) = \begin{cases} s\, \log \delta &\mbox{ if } s< \delta,\\
s\log s - s + \delta &\mbox{ if } s \geq \delta.
\end{cases}
$$
Note that $\partial_x \log  L_{\delta}(s)  = \frac{\partial_x s}{s} \mathds{1}_{s\geq \delta}$
so $|\partial_x \log L_{\delta}(s)| \leq \frac{s^{\alpha}}{\delta^{\alpha}} \, \frac{|\partial_x s|}{s} \, \mathds{1}_{s\geq \delta} \leq \frac{1}{\alpha\delta^{\alpha}}\, |\partial_x s^{\alpha}|$, from which we deduce that $\partial_x \log  L_{\delta}(s) \in L^2((0,T)\times\R)$ for each $\delta>0$. We consider the test function $\varphi(\tau,x) = \log L_{\delta}(s)\, \psi_R(x)\, \mathds{1}_{[0,t]}(\tau)$ in \eqref{eq:weak_formulation_for_s}, where $\psi_R$ is a smooth function with $\psi_R =1$ on $[-R,R]$, $\psi_R \geq 0$, $\psi_R = 0$ on $\R \setminus [-2R,2R]$, and $|\partial_x \psi_R|\leq C_{\psi}/R$ for some constant $C_{\psi}$.

By \cite[Lemma A.1]{MR4563032}, the following chain rule holds:
\begin{equation}\label{eq:chain_rule_application_slogs_negative_H1}
\partial_t \int_{\R} \widetilde{L}_{\delta}(s(t,x)) \, \psi_R(x) \diff x = \langle \partial_t s, \log{L}_{\delta}(s(t,x)) \, \psi_R(x) \rangle_{H^{-1}(\R), H^1(\R)}.
\end{equation}
Moreover, we know $\partial_t s \in L^2(0,T; H^{-1}(\R))$  by Lemma \ref{lem_additional_prop_reg_weak_sln}, and the right-hand side of \eqref{eq:chain_rule_application_slogs_negative_H1} is in $L^1(0,T)$ for a.e. $t$ since $\log{L}_{\delta}(s(t,x)) \, \psi_R(x) \in L^2(0,T; H^1(\R))$. Therefore, the map $t \mapsto \int_{\R} \widetilde{L}_{\delta}(s(t,x)) \, \psi_R(x) \diff x$ is absolutely continuous and for a.e. $0<r<t<T$
\begin{equation}\label{eq:application_of_the_chain_rule_negative_ssob_space_for_time_derivative_LlogL}
\begin{split}
\int_{\R} \widetilde{L}_{\delta}(s(t,x)) \, &\psi_R(x) \diff x + \frac{1}{\alpha} \int_r^t \int_{\R} \partial_x s^{\alpha}\, \partial_x(\log(L_{\delta}(s))\,\psi_R) \diff x \diff \tau \\ &- \int_r^t \int_{\R} {(u-v)\, \partial_x (\log(L_{\delta}(s)) \,\psi_R)} \diff x \diff \tau = \int_{\R} \widetilde{L}_{\delta}(s(r,x)) \, \psi_R(x) \diff x.
\end{split}
\end{equation}
We now pass to the limit $r\to0$. We claim that, along some sequence $r_n\to0$, $s(r_n,x)\weaks~s^0(x)$ locally weakly$^*$ in the space of measures, i.e. $\int_{\R} \varphi(x)\, (s(r_n,x)-s^0(x)) \diff x \to 0$ for all continuous and compactly supported $\varphi$. By the Banach-Alaoglu theorem and the bound on $\int_{\R} s(r,x) \diff x$ for a.e. $r$, there exists a sequence $r_n \to 0$ such that $s(r_n,x)\weaks~\mu$ for some measure $\mu$. By Lemma \ref{lem_additional_prop_reg_weak_sln}, $s(r,x)\to s^0(x)$ in $H^{-1}(\R)$ as $r\to0$, so we deduce $\int_{\R} \mu \, \varphi \diff x = \int_{\R} s^0\, \varphi \diff x$ for smooth and compactly supported $\varphi$. Hence $\mu = s^0$ and the claim follows. Finally, since the map $[0,\infty)\ni s\mapsto \widetilde{L}_{\delta}(s)$ is convex, lower semicontinuity of convex functionals with respect to the local weak$^*$ convergence \cite[Theorem~2.34]{MR1857292} yields
\begin{equation}\label{eq:lower_semicontinuity_entropy_time_0_proof_that_trace_at_0_is_zero}
\liminf_{r\to 0} \int_{\R} \widetilde{L}_{\delta}(s(r,x)) \, \psi_R(x) \diff x \geq \int_{\R} \widetilde{L}_{\delta}(s^0(x)) \, \psi_R(x) \diff x.
\end{equation}
Here, we do not need to subtract $e^{-|x|^{1/2}}$ as in \eqref{eq:adding_exp_to_make_fcnal_lower_semicontinuous}, since the integrand is on a bounded domain thanks to $\psi_R$. 

It follows that
\begin{align*}
\int_{\R} \widetilde{L}_{\delta}(s(t,x)) \, \psi_R(x) \diff x - \int_{\R} \widetilde{L}_{\delta}(s^0(x)) \, \psi_R(x) \diff x+ \frac{1}{\alpha} \int_0^t \int_{\R} \partial_x s^{\alpha}\, \partial_x(\log L_{\delta}(s)\,\psi_R) \diff x \diff \tau \\ - \int_0^t \int_{\R} {(u-v)\, \partial_x (\log L_{\delta}(s) \,\psi_R)} \diff x \diff \tau =: A + B + C +D \geq 0. 
\end{align*}
We want to perform the limits $\delta \to 0$ and $R \to \infty$ together along a countable sequence. Clearly, by the dominated convergence (note that $|\widetilde{L}_{\delta}(s(t,x)) |\leq s(t,x)\,|\log s(t,x)| + s(t,x) \in L^1(\R)$ for a.e.\ $t$, by the regularity from Theorem~\ref{thm:main} together with a standard estimate of $s|\log s|$ in terms of $s^2$ and $s |x|$; see, e.g., \cite[Remark 2.2]{PME_1d_Brinkman_Darcy_rate}), the first and the second term converge to $\int_{\R} s(t,x) \log s(t,x) \diff x$ and $-\int_{\R} s^0 \log s^0(x) \diff x$, respectively. The term $C$ can be expanded as
\begin{align*}
C &= \frac{1}{\alpha} \int_0^t \int_{\R} \partial_x s^{\alpha}\, \partial_x(\log L_{\delta}(s) \,\psi_R) \diff x \diff \tau \\ 
&= \frac{1}{\alpha} \int_0^t \int_{\R} \partial_x s^{\alpha}\, \frac{\partial_x s}{s} \, \mathds{1}_{s>\delta}\, \psi_R \diff x \diff \tau + \frac{1}{\alpha} \int_0^t \int_{\R} \partial_x s^{\alpha}\, \log L_{\delta}(s)\,\partial_x \psi_R \diff x \diff \tau.
\end{align*}
The first term converges to $\frac{4}{\alpha^2} \int_0^t \int_{\R} |\partial_x s^{\frac{\alpha}{2}}|^2 \diff x \diff \tau$ by dominated convergence. For the second term, we estimate $|\log L_{\delta}(s)|\leq |\log\delta| + s \leq |\log \delta| + \|s\|_{L^{\infty}_{t,x}} \leq (1+\|s\|_{L^{\infty}_{t,x}})\,|\log \delta|$ for $\delta \in (0,1)$ and $|\partial_x \psi_R|\leq C_{\psi}/R$ so by the Hölder inequality, it can be estimated by
\begin{align*}
 \int_0^t \int_{\R} \partial_x s^{\alpha}\, \log L_{\delta}(s) \,\partial_x \psi_R \diff x \diff \tau &\leq \|\partial_x s^{\alpha}\|_{L^2_{t,x}} \, \frac{C_{\psi}\,(1+\|s\|_{L^{\infty}_{t,x}})\,|\log \delta|}{R}\, \|\mathds{1}_{R\leq|x|\leq2R} \|_{L^2_{t,x}}\\
 &\leq \|\partial_x s^{\alpha}\|_{L^2_{t,x}} \, \frac{C_{\psi}\,(1+\|s\|_{L^{\infty}_{t,x}})\,|\log \delta|}{R} \,  \, \sqrt{2R\, t}. 
\end{align*}
Choosing $\delta$ and $R$ in such a way that $|\log\delta|/\sqrt{R} \to 0$, the term converges to zero. Finally, for the term~$D$, 
$$
D=- \int_0^t \int_{\R} {(u-v)\, \frac{\partial_x s}{s}\, \mathds{1}_{s>\delta} \, \psi_R} \diff x \diff \tau - \int_0^t \int_{\R} {(u-v)\, \log(L_{\delta}(s)) \, \partial_x\psi_R} \diff x \diff \tau.
$$
The second term converges to 0 since $u\, |\log L_{\delta}(s)|$, $v\, |\log L_{\delta}(s)|\leq s \, |\log L_{\delta}(s)| \leq s\,|\log s|$ for $\delta \in (0,1)$ and $|\partial_x\psi_R|\leq C_{\psi}/R$. For the first term, as in \eqref{eq:example_how_to_do_smooth_slns_trace_at_0}, we use $u = s$ for $x\leq 0$ and $v = s$ for $x \geq 0$ to find that
\begin{align}\label{eq:the_first_term_in_D_to_pass_to_the_limit}
-&\int_0^t \int_{\R} (u-v)\, \frac{\partial_x s}{s}\, \mathds{1}_{s>\delta} \, \psi_R \diff x \diff \tau \\
&=- \int_0^t \int_{-\infty}^0 \partial_x \max(s,\delta)\, \psi_R \diff x  \diff \tau + \int_0^t \int_0^{\infty} \partial_x \max(s,\delta)\, \psi_R \diff x  \diff \tau \nonumber \\
&= -2\int_0^t \max(s(\tau,0),\delta) \diff \tau + \int_0^t \int_{\R} \max(s,\delta)\, \partial_x \psi_R \, (\mathds{1}_{(-\infty,0]}(x) -\mathds{1}_{[0,\infty)}(x)) \diff x \diff \tau. \nonumber 
\end{align}
Note that, for every $\delta>0$, the function $x\mapsto \max(s(\tau,x),\delta)$ admits a continuous representative for a.e.\ $\tau$, since $\partial_x\max(s,\delta) = \partial_x s \, \mathds{1}_{s\geq \delta} = \frac{1}{\alpha}\,s^{1-\alpha}\,\partial_x s^{\alpha}\, \mathds{1}_{s\geq \delta} \in L^2(\mathbb R)$ for a.e. $\tau$. The set of times $\tau$ for which this property holds simultaneously along a countable sequence $\delta\to0$ has full measure, thereby justifying the integration above. Moreover, this continuous representative coincides with $\max(s(\tau,x),\delta)$, where $s(\tau,x)$ denotes the continuous representative constructed in Remark~\ref{rem:def_s_at_x_0}. Indeed, two continuous functions that agree almost everywhere must agree everywhere.

Coming back to \eqref{eq:the_first_term_in_D_to_pass_to_the_limit}, we note that
\begin{align*}
\int_0^t \int_{\R} \max(s,\delta)\, |\partial_x \psi_R| \diff x \diff \tau &\leq \frac{C_{\psi}}{R}\int_0^t \int_{\R} s \diff x \diff \tau + \delta \int_0^t \int_{-2R}^{2R} \frac{C_{\psi}}{R} \diff x \diff \tau\\
&\leq \frac{C_{\psi}\,t}{R} \int_{\R} s^0 \diff x + 4\, C_{\psi} \, \delta\,t,  
\end{align*}
so we can pass to the limit in \eqref{eq:the_first_term_in_D_to_pass_to_the_limit} to obtain
$$
- \int_0^t \int_{\R} (u-v)\, \frac{\partial_x s}{s}\, \mathds{1}_{s>\delta} \, \psi_R \diff x \diff \tau  \to -2\int_0^t s(\tau,0) \diff \tau,
$$
which concludes the proof of \eqref{eq:step_2_proof_inequality_on_slogs}.

\underline{Step 3: Conclusion.} For a.e. $t \in [0,t^*]$, we have $\int_0^{\infty}u(t,x)\diff x = \int_{-\infty}^0 v(t,x) \diff x=0$, so that $s\log s = u\log u + v\log v$ a.e. on $\R$. We subtract \eqref{eq:step_2_proof_inequality_on_slogs} from \eqref{eq:entropy_individual_inequality_the_limit} to conclude that $\int_0^t s(\tau,0) \diff \tau =~0$. Passing to the limit $t \nearrow  t^*$, we infer that $\int_0^{t^*} s(\tau,0) \diff \tau =~0$, so $s(t,0) = 0$ for a.e. $t \in [0,t^*]$ as desired. 
\end{proof}

\begin{rem}\label{rem:difficulty_if_arguing_when_s_smooth}
In Step 2 of the proof above, one might be tempted to first establish the result for $s_\varepsilon$ and then pass to the limit $\varepsilon \to 0$, since $\seps$ enjoys additional regularity. However, this approach would require the strong convergence $\partial_x \seps^{\frac{\alpha}{2}} \to \partial_x s^{\frac{\alpha}{2}}$ in $L^2((0,T)\times\R)$, which is not known to hold in general for all $\alpha \in (0,\infty)$.
\end{rem}

We are now ready to prove Theorem \ref{thm:main_nonuniqueness}, Corollary~\ref{cor:nonuniqueness_mixing}, and Theorem \ref{thm:quantitative_bounds}.
\begin{proof}[Proof of Theorem \ref{thm:main_nonuniqueness}]

The non-mixing solution $(u_A, v_A)$ has been constructed in Proposition \ref{prop:existence_non_mixing_sln}. The mixing solution $(u_B, v_B)$ will be constructed as the vanishing viscosity limit from Theorem \ref{thm:main}. To this end, we first claim that for any regular weak solution to \eqref{eq:intro-toy-model-to-study-nonuniqueness} (see Definition \ref{def:regular_weak_sln}), $s= u+ v$ satisfies
\begin{equation}\label{eq:identity_for_the_center_of_the_mass}
\int_{-\infty}^0 s(t,x) \, x \diff x - \int_{-\infty}^0 s^0(x) \, x \diff x  = - \frac{1}{\alpha} \int_0^t s^{\alpha}(\tau,0) \diff \tau   + \int_0^t \int_{-\infty}^0 (u- v) \diff x \diff \tau.
\end{equation}
Rigorously, this identity can be obtained by using a test function of the form $\varphi(\tau,x) = \min(x,0)\, \psi_R(x) \, \mathds{1}_{[0,t]}(\tau)$ in \eqref{eq:weak_formulation_for_s} and using $\partial_x s^{\alpha} \in L^2((0,T)\times\R)$, $s\,|x| \in L^{\infty}(0,T; L^1(\Rd))$ and $s\in L^{\infty}((0,T)\times\R)$ to let $R\to\infty$, similarly as in the proof of Proposition~\ref{prop:trace_at_0_for_separated_solutions}. More precisely, the term involving $\partial_t s$ is treated using the chain rule (as in \eqref{eq:chain_rule_application_slogs_negative_H1}). In contrast to~\eqref{eq:lower_semicontinuity_entropy_time_0_proof_that_trace_at_0_is_zero}, we have $\lim_{r\to0} \int_{\R} s(r,x)\,\min(x,0)\,\psi_R \diff x= \int_{\R} s^0(x)\,\min(x,0)\,\psi_R \diff x$, so we obtain an equality in \eqref{eq:identity_for_the_center_of_the_mass} rather than an inequality as in \eqref{eq:step_2_proof_inequality_on_slogs}. The term with $u-v$ reads as $\int_0^t \int_{-\infty}^0 (u-v)\,(\psi_R + x\,\partial_x \psi_R) \diff x \diff \tau$. The first part is handled via dominated convergence, while the second one is estimated via the decay $|\partial_x \psi_{R}|\leq C_{\psi}/R$ and integrability of $u\,|x|$ and $v\,|x|$. The remaining term is more subtle. We compute
\begin{align*}
-\frac{1}{\alpha} \int_{0}^t \int_{-\infty}^0 \partial_x s^{\alpha} \,&\partial_x (\min(x,0)\, \psi_R(x)) \diff x \diff \tau = 
-\frac{1}{\alpha} \int_{0}^t \int_{-\infty}^0 \partial_x s^{\alpha} \, (x\, \partial_x \psi_R + \psi_R ) \diff x \diff \tau\\
&= -\frac{1}{\alpha} \int_{0}^t s^{\alpha}(\tau,0) \diff \tau  + \frac{1}{\alpha} \int_{0}^t \int_{-\infty}^0 s^{\alpha} \, \partial_x(x\, \partial_x \psi_R + \psi_R ) \diff x \diff \tau\\
&= -\frac{1}{\alpha} \int_{0}^t s^{\alpha}(\tau,0) \diff \tau + \frac{1}{\alpha} \int_{0}^t \int_{-\infty}^0 s^{\alpha} \, (2\,\partial_x \psi_R + x \,\partial^2_x \psi_R) \diff x \diff \tau.
\end{align*}
To prove that the last integral converges to zero when $R\to \infty$, we need a condition on $\partial^2_x \psi_R$. It is not difficult to see that there is a cutoff function such that $\psi_R=1$ on $[-R,R]$, $\psi_R$~is supported on $[-2R,2R]$, $|\partial_x\psi_R|\leq C_{\psi}/R$, $|\partial^2_x \psi_R|\leq C_{\psi}/R^2$ for a numerical constant $C_{\psi}$; see, e.g., \cite[Remark~E.2]{carrillo2026new} for an explicit construction. With this decay, the term $x\,|\partial^2_x \psi_R|$ has the same properties as $|\partial_x \psi_R|$, so we only need to study the integral $\int_{0}^t \int_{-\infty}^0 s^{\alpha} \,|\partial_x \psi_R| \diff x \diff \tau$. If~$\alpha \geq 1$, $s^{\alpha}\in L^1((0,T)\times\R)$ by Theorem~\ref{thm:main}, and thus, the integral converges to zero by $|\partial_x\psi_R|\leq C_{\psi}/R$. On the other hand, if $\alpha < 1$, we apply Hölder's inequality with exponents $1/\alpha$ and $1/(1-\alpha)$. Combining 
$$
\|\partial_x \psi_R \|_{L^{\frac{1}{1-\alpha}}_{t,x}} \leq \frac{C_{\psi}}{R} \, (t\,R)^{{1-\alpha}} \leq C_{\psi}\,t^{{1-\alpha}}\, R^{-\alpha} \to 0 \mbox{ as } R\to \infty
$$
with the $L^{\infty}(0,T;L^1(\R))$ bound on $s$, $\int_{0}^t \int_{-\infty}^0 s^{\alpha} \,|\partial_x \psi_R| \diff x \diff \tau \to 0$ and we arrive at \eqref{eq:identity_for_the_center_of_the_mass}.

Now, let $\delta>0$ and suppose that the vanishing viscosity limit yields a solution $(u,v)$ such that, for a.e. $t \leq -\int_{-\infty}^0 u^0(x) \, x \diff x/\int_{-\infty}^0 u^0(x) \diff x + \delta$, $u(t,x)$ is supported for $x\in(-\infty,0]$, while $v(t,x)$ is supported for $x\in[0,\infty)$. By the segregation assumption and Proposition~\ref{prop:trace_at_0_for_separated_solutions}, $s^{\alpha}(\tau,0) = 0$ a.e., so identity~\eqref{eq:identity_for_the_center_of_the_mass} simplifies to 
\begin{equation}\label{eq:mixing_cfinal_contradiction}
\int_{-\infty}^0 u(t,x) \, x \diff x - \int_{-\infty}^0 u^0(x) \, x \diff x  =  t \int_{-\infty}^0 u^0(x) \diff x.
\end{equation}
It follows that $\int_{-\infty}^0 u(t,x) \, x \diff x$ becomes positive for sufficiently large $t$, which leads to a contradiction and yields a~set $\mathcal{F}$ of positive measure, where the segregation assumption does not hold. In particular,
$$
\essinf \mathcal{F} \leq -\frac{\int_{-\infty}^0 u^0(x) \, x \diff x}{\int_{-\infty}^0 u^0(x) \diff x} + \delta.
$$
Sending $\delta \to 0$, we obtain estimate \eqref{eq:estimate_time_mixing_main_statement}.
\end{proof}

\begin{proof}[Proof of Corollary~\ref{cor:nonuniqueness_mixing}] Let $(u_A, v_A)$, $(u_B, v_B)$ be as in the statement of Theorem \ref{thm:main_nonuniqueness} constructed on the interval $[0,t_{m}+1]$, where $t_m>-\int_{-\infty}^0 u^0(x) \, x \diff x/\int_{-\infty}^0 u^0(x) \diff x$, so that thanks to \eqref{eq:estimate_time_mixing_main_statement}, the solution $(u_B, v_B)$ is already mixed at time $t_m$. 

Now, let us take the segregated solution $u_A(t,\cdot), v_A(t,\cdot) \in L^1(\R)\cap L^{\infty}(\R)$, $\int_{-\infty}^0 v_A(t,x) \diff x = \int_{0}^{\infty} u_A(t,x) \diff x = 0$, $u_A(t,\cdot)\,|\cdot| \in L^1(\R)$ for a.e.\ $t$, and let $t_0 \in (t_m, t_m + \frac{1}{2})$. By Theorem \ref{thm:main_nonuniqueness}, there exists a mixing solution $(u_C, v_C)$ with initial condition $(u_C(0,\cdot), v_C(0,\cdot)) = (u_A(t_0,\cdot), v_A(t_0,\cdot))$, which will mix in finite time. We define $(\tilde{u}, \tilde{v})$ as 
$$
\tilde{u}(t,x) = \begin{cases}
u_A(t,x) &\mbox{ if } t <t_0,\\
u_C(t-t_0,x) &\mbox{ if } t\geq t_0,
\end{cases} \qquad 
\tilde{v}(t,x) = \begin{cases}
v_A(t,x) &\mbox{ if } t <t_0,\\
v_C(t-t_0,x) &\mbox{ if } t\geq t_0.
\end{cases}
$$
We also define $s_A$, $s_C$, and $\tilde{s}$ in an obvious way. By construction, $(\tilde{u}, \tilde{v})$ gets mixed for some time greater than $t_0>t_m$, so it is distinct from $(u_B,v_B)$. Hence, it only remains to prove that the solution is a regular weak solution for $t \geq t_0$ as stated in Definition~\ref{def:regular_weak_sln}. Clearly, all the regularity assertions therein are satisfied, so we only need to prove \eqref{eq:weak_form_original_form_u}--\eqref{eq:weak_form_original_form_v}. By \eqref{eq:weak_formulation_H^-1,H^1_pointwise_in_time}, for all $\psi \in C_c^{\infty}(\Rd)$ and for all $t \in [0,t_0]$,
$$
- \langle u_A(t, \cdot),  \psi \rangle_{H^{-1}(\R), H^1(\R)} + \langle u^0,  \psi \rangle_{H^{-1}(\R), H^1(\R)}= \int_0^t \int_{\R} \left(\frac{1}{\alpha} \, \frac{u_A}{s_A}\, \partial_x s^{\alpha}_A - u_A \right)\, \partial_x \psi \diff x \diff \tau,
$$
while for all $t \geq t_0$:
\begin{align*}
- &\langle u_C(t-t_0, \cdot),  \psi \rangle_{H^{-1}(\R), H^1(\R)} + \langle u_A(t_0,\cdot),  \psi \rangle_{H^{-1}(\R), H^1(\R)}\\ 
&= \int_{0}^{t-t_0} \int_{\R} \left(\frac{1}{\alpha} \, \frac{u_C}{s_C}\, \partial_x s^{\alpha}_C - u_C \right)\, \partial_x \psi \diff x \diff \tau = \int_{t_0}^{t} \int_{\R} \left(\frac{1}{\alpha} \, \frac{\tilde{u}}{\tilde{s}}\, \partial_x \tilde{s}^{\alpha} - \tilde{u} \right)\, \partial_x \psi \diff x \diff \tau.
\end{align*}
Combining both equations we obtain for all $t\geq 0$ 
$$
- \langle \tilde{u}(t, \cdot),  \psi \rangle_{H^{-1}(\R), H^1(\R)} + \langle u^0,  \psi \rangle_{H^{-1}(\R), H^1(\R)}= \int_0^t \int_{\R} \left(\frac{1}{\alpha} \, \frac{\tilde{u}}{\tilde{s}}\, \partial_x \tilde{s}^{\alpha} - \tilde{u} \right)\, \partial_x \psi \diff x \diff \tau.
$$
Multiplying this equation by $\partial_t \eta$, where $\eta \in C^{\infty}[0,T]$ with $\eta(T) = 0$, and integrating in time, we obtain
$$
\int_0^T \int_{\R} \tilde{u}\, \partial_t \eta \, \psi \diff x \diff \tau + \int_{\R} u^0(x) \, \eta(0)\, \psi(x) \diff x = \int_0^T \int_{\R} \left(\frac{1}{\alpha} \, \frac{\tilde{u}}{\tilde{s}}\, \partial_x \tilde{s}^{\alpha} - \tilde{u} \right)\, \partial_x \psi \, \eta \diff x \diff \tau.
$$
Since functions of the form $\psi(x) \, \eta(t)$ with $\eta(T) = 0$ can approximate any function in $C_c^{\infty}([0,T) \times\R)$, we infer that $\tilde{u}$ satisfies \eqref{eq:weak_form_original_form_u}. The same argument works for $\tilde{v}$. It follows that $(\tilde{u},\tilde{v})$ is a regular weak solution. It is also clear that the argument can be used to construct another mixing solution by considering a solution that stays segregated up to a moment when $(\tilde{u}, \tilde{v})$ already got mixed and gluing it with a mixing solution. It follows that there are infinitely many mixing solutions.

We now discuss the uniqueness of the segregated solution. Let $(u,v)$ be a segregated solution on $\R^-=(-\infty,0]$ and $\R^+=[0,\infty)$. Then $s=u$ a.e.\ on $(-\infty,0]$. Hence, $u$ is a weak solution to~\eqref{eq:Neumann_problem_on_a_half_space} with Neumann boundary conditions and initial condition $u_0$. Moreover, it satisfies $u\in L^{\infty}(0,T; L^1(\R^-)) \cap L^{\infty}(0,T; L^\infty(\R^-))$, $u|x|\in L^{\infty}(0,T; L^1(\R^-))$, $\partial_x u^\alpha \in L^2((0,T)\times\R^-)$, $\partial_t u \in L^2(0,T;H^{-1}(\R^-))$. Such a solution is unique; see, for example, \cite[Proposition~3.5]{MR2286292}.
\end{proof}

\begin{proof}[Proof of Theorem~\ref{thm:quantitative_bounds}] We remind that throughout the proof we assume additionally that $v(t,x) = u(t,-x)$ a.e.\ and $\alpha \in (1,2)$. In what follows, the constants $C_1(\alpha, s^0)$, $C_2(\alpha, s^0)$, $C_3(\alpha, s^0)$, $C_4(\alpha, s^0)$, $C(\alpha, s^0)$ will depend on $\alpha$, $\|s^0\|_{L^1(\R)}$, $\|s^0\|_{L^{\infty}(\R)}$, and $\int_{\R} s^0\, |x|^2 \diff x$. 

\underline{Step 1: The entropy estimate.} We will prove that there exists a constant $C_1(\alpha, s^0)$ such that
$$
-C_1(\alpha, s^0)\, (1+\log (t+1)) + \int_0^t s(\tau,0) \diff \tau \leq 2 \int_0^t \int_{\R^-} {v^{2-\alpha}} \diff x \diff \tau,
$$
where $\R^-=(-\infty,0]$. We consider functional $F(t) = \int_{\R} \left(u\log u + v\log v - \frac{1}{2} s\log s\right) \diff x$. Similarly to the derivation of the rigorous versions of \eqref{eq:entropy_individual_inequality_the_limit} and \eqref{eq:entropy_of_the_sum_with_additional_nice_term} in Proposition~\ref{prop:trace_at_0_for_separated_solutions}, we obtain the inequality
$$
F(t) + \frac{2}{\alpha^2} \int_0^t \int_{\R}  |\partial_x s^{\alpha/2}|^2 \diff x \diff \tau \leq - \frac{1}{\alpha}\int_0^t\int_{\R} \frac{u-v}{s^{\alpha/2}}\, \partial_x s^{\alpha/2} \diff x \diff \tau + F(0).
$$
The only difference in the derivation is that, instead of \eqref{eq:the_first_term_in_D_to_pass_to_the_limit}, we write
$$
-\frac{1}{2} \int_0^t \int_{\R} (u-v)\, \frac{\partial_x s}{s}\, \mathds{1}_{s>\delta} \, \psi_R \diff x \diff \tau = - \frac{1}{\alpha}\int_0^t\int_{\R} \frac{u-v}{s^{\alpha/2}}\, \partial_x s^{\alpha/2}\, \mathds{1}_{s>\delta}\, \psi_R \diff x \diff \tau
$$
(comparing with \eqref{eq:the_first_term_in_D_to_pass_to_the_limit}, the factor of $\frac{1}{2}$ appears in front of the expression on the left-hand side because we take $\frac{1}{2} s\log s$ in the definition of $F$). The joint limit $R\to \infty$, $\delta\to0$ is easy to justify by dominated convergence, since $\partial_x s^{\alpha/2} \in L^2((0,T)\times\R)$ by Theorem~\ref{thm:main}, while $(u-v)s^{-\alpha/2} \in L^{\infty}(0,T;L^2(\R))$ for all $\alpha <2$, because $|u-v|s^{-\alpha/2} \leq s^{1-\alpha/2}$ and $s$ has arbitrarily high moments bounded by Lemma \ref{lem:arbitrary_high_moments}.

Since $u(t,x)=v(t,-x)$, we find that
\begin{align*}
- \frac{1}{\alpha}\int_0^t\int_{\R} \frac{u-v}{s^{\alpha/2}}\, \partial_x &s^{\alpha/2} \diff x \diff \tau = - \frac{2}{\alpha}\int_0^t\int_{\R^-} \frac{u-v}{s^{\alpha/2}}\, \partial_x s^{\alpha/2} \diff x \diff \tau \\ &= - \int_0^t s(\tau,0) \diff \tau +  \frac{4}{\alpha}\int_0^t\int_{\R^-} \frac{v}{s^{\alpha/2}}\, \partial_x s^{\alpha/2} \diff x \diff \tau \\
&\leq - \int_0^t s(\tau,0) \diff \tau + 2 \int_0^t \int_{\R^-} \frac{v^2}{s^{\alpha}} \diff x \diff \tau+ \frac{2}{\alpha^2} \int_0^t \int_{\R^-} |\partial_x s^{\alpha/2}|^2 \diff x \diff \tau.
\end{align*}
It follows that
$$
F(t) - F(0) + \int_0^t s(\tau,0) \diff \tau \leq 2 \int_0^t \int_{\R^-} \frac{v^2}{s^{\alpha}} \diff x \diff \tau \leq 2\int_0^t\int_{\R^-} {v^{2-\alpha}} \diff x \diff \tau.
$$
Combining Lemma \ref{eq:ulogu_lower_bound_in_terms_of_2nd_moment} and the second estimate in Lemma \ref{lem:quantitative_dependence_constants}, we deduce that there is a~constant $C_1(\alpha, s^0)$ such that $F(t) - F(0) \geq -C_1(\alpha, s^0)\, (1+\log (t+1))$. The conclusion of this step follows. 

\underline{Step 2: The tail estimate.} We will prove that
$$
0 \leq 2 \int_{\R^-} v(t,x)\, |x| \diff x + 2\alpha^2 \int_0^t \int_{\R^-} v(\tau,x) \diff \tau \diff x  + 2 \int_0^t s^{\alpha}(\tau,0) \diff \tau - t \int_{\R} s^0(x) \diff x + E(0),
$$
where $E(t) := \frac{1}{\alpha}\int_{\R} s^{\alpha} \diff x - \int_{\R} (u-v)\,x \diff x$. Indeed, computing formally, 
\begin{equation}\label{eq:rate_of_change_energy_lower_bound_on_mixing}
\begin{split}
E'(t) &= \int_{\R} s^{\alpha-1} \partial_x(s\,\partial_x p(s) - u + v) \diff x - \int_{\R} x\, \partial_x((u-v)\,\partial_x p(s) - s) \diff x \\
&=- (\alpha-1) \int_{\R} s\,|\partial_xp(s)|^2 \diff x + \alpha \int_{\R} (u-v)\, \partial_x p(s) \diff x- \int_{\R} s^0(x) \diff x.
\end{split}
\end{equation}
Since we are only interested in an inequality, the above computation can be made rigorous by performing it for the approximate solutions to \eqref{eq:general_cross_diffusion_intro_viscosity} and then passing to the limit $\eps\to0$ using Theorem~\ref{thm:main}. By the assumption, $(u-v)\,\partial_x p(s)$ and $s\,|\partial_x p(s)|^2$ are even functions so that 
\begin{align*}
\int_{\R} (u-v)\, \partial_x p(s) \diff x &= 2\int_{\R^-} (u-v) \, \partial_x p(s) \diff x = 2\int_{\R^-} s \, \partial_x p(s) \diff x - 4\int_{\R^-} v\, \partial_x p(s) \diff x\\
&=\frac{2}{\alpha} s^{\alpha}(t,0) + 4\int_{\R^-} \frac{v}{\sqrt{s}}\, \sqrt{s}\,|\partial_x p(s)| \diff x \\
&\leq \frac{2}{\alpha} s^{\alpha}(t,0) + 2\, \alpha \int_{\R^-} v\diff x + \frac{\alpha-1}{\alpha} \int_{\R} s\,|\partial_x p(s)|^2 \diff x.
\end{align*}
Combining it with \eqref{eq:rate_of_change_energy_lower_bound_on_mixing}, we find that 
\begin{equation}\label{eq:ODI_for_energy_bound_on_the_mixing_quantity}
E'(t) \leq 2\, s^{\alpha}(t,0) + 2\, \alpha^2 \int_{\R^-} v\diff x - \int_{\R} s^0(x) \diff x.
\end{equation}
On the other hand, using that $(u-v)\,x$ is an even function,
$$
E(t) \geq -2 \int_{\R^-} (u-v)\, x \diff x \geq - 2\int_{\R^-} v\, |x| \diff x.
$$
Integrating \eqref{eq:ODI_for_energy_bound_on_the_mixing_quantity} in time and using the lower bound above, we obtain the desired estimate in this step.

\underline{Step 3: Conclusion.} Suppose that $t$ is such that $
2 \int_0^t s^{\alpha}(\tau,0) \diff \tau + E(0) + 1\leq \frac{t}{2} \int_{\R} s^0(x) \diff x$. Then, by Step 2,
$$
\frac{t}{2} \int_{\R} s^0(x) \diff x + 1 \leq 2 \int_{\R^-} v(t,x)\, |x| \diff x + 2\alpha^2 \int_0^t \int_{\R^-} v(\tau,x) \diff \tau \diff x,
$$
which can be rewritten as
$$
C_2(\alpha,s^0)\, (1+t) \leq  2 \int_{\R^-} v(t,x)\, |x| \diff x + 2\alpha^2 \int_0^t \int_{\R^-} v(\tau,x) \diff \tau \diff x
$$
for some constant $C_2( \alpha,s^0)$, which proves \eqref{eq:quant_bound_main_thm_energy_one}. On the other hand, if $t$ is such that $
2 \int_0^t s^{\alpha}(\tau,0) \diff \tau + E(0) + 1\geq \frac{t}{2} \int_{\R} s^0(x) \diff x$, we have
\begin{align*}
\frac{t}{2} \int_{\R} s^0(x) \diff x &\leq 2 \int_0^t s^{\alpha}(\tau,0) \diff \tau  + E(0) + 1 \leq 2\, \| s\|_{L^{\infty}((0,t)\times\R)}^{\alpha-1} \int_0^t s(\tau,0) \diff \tau + E(0) + 1 \\ &\leq C(s^0,\alpha)\, (1+t^{2(\alpha-1)/(3\alpha)})\, \int_0^t  s(\tau,0) \diff \tau + E(0) + 1,
\end{align*}
where constant $C(\alpha,s^0)$ comes from the first estimate in Lemma~\ref{lem:quantitative_dependence_constants}. We deduce from $1 - 2(\alpha-1)/(3\alpha) = (\alpha+1)/(3\alpha)$ that
$$
\int_0^t s(\tau, 0) \diff \tau \geq C_3(\alpha,s^0) \, t^{(\alpha+1)/(3\alpha)} - C_4(\alpha,s^0). 
$$
By Step 1, we obtain \eqref{eq:quant_bound_main_thm_log_one} and the proof concludes.
\end{proof}

\begin{rem}
More quantitative information can be deduced if the solution satisfies additional conditions. For example, if $s(\tau,0) = 0$ for a.e. $0<\tau<t$, then we deduce from Step~2 of the proof of Theorem~\ref{thm:quantitative_bounds} that 
$$
  t\int_{\R} s^0(x) \diff x - E(0) \le 2 \int_{\R^-} v(t,x)\, |x| \diff x + 2\,\alpha^2 \int_0^t \int_{\R^-} v(\tau,x) \diff \tau \diff x,
$$
which measures how much mass of $v$ appears on $\R^-$. This gives another estimate on the first time of mixing under the assumptions of Theorem~\ref{thm:quantitative_bounds} as
    $$
    \essinf \mathcal{F} \leq \frac{\frac{1}{\alpha}\int_{\R} (s^0)^{\alpha} \diff x + \int_{\R} (v^0-u^0)\,x \diff x}{\int_{\R} s^0(x) \diff x}.
    $$
\end{rem}

\subsection*{Acknowledgements}
JAC was supported by the Advanced Grant Nonlocal-CPD (Nonlocal PDEs for Complex Particle Dynamics: Phase Transitions, Patterns and Synchronization) of the European Research Council Executive Agency (ERC) under the European Union’s Horizon 2020 research and innovation programme (grant agreement No. 883363). AJ acknowledges partial support from the Austrian Science Fund (FWF), grant number 10.55776/PAT2687825, and from the Austrian Federal Ministry for Women, Science and Research and implemented by \"OAD, project MULT09/2025. This work has received funding from the European Research Council (ERC) under the European Union's Horizon 2020 research and innovation programme, ERC Advanced Grant NEUROMORPH, no.~101018153.
JS was partially supported by the National Science Centre, Poland (agreement no. 2021/43/B/ST1/02851). YY was partially supported by the MOE Tier 1 grant and the Asian Young Scientist Fellowship.

\appendix

\section{Proof of the estimates \eqref{eq:regularity_half_line_solution_u}--\eqref{eq:regularity_half_line_solution_u_2}}\label{app:estimates}
We consider solution to \eqref{eq:Neumann_problem_on_a_half_space} on $[0,T] \times (-\infty,0] = [0,T]\times\R^-$ with Neumann boundary condition $u \, \partial_x p(u) - u = \frac{1}{\alpha}\,\partial_x u^{\alpha} - u= 0$ at $x = 0$ and the initial condition $u^0$. By a~standard approximation, we may assume that the solution is smooth and conserves the mass so that $M = \int_{\R^-} u(t,x) \diff x = \int_{\R^-} u^0(x) \diff x$ (the conservation will be also true after passing to the limit with the approximation by the estimate on $u\,|x|$).

\underline{Estimates on $u\,|x|$ and $\partial_x u^{\frac{\alpha}{2}}$.} We first observe that
\begin{align}\label{eq:appendix_first_moment_diffusion_half_line}
\partial_t \int_{\R^-} u\,|x| \diff x &=  \int_{\R^-} \Big(\frac{1}{\alpha} \partial_x u^{\alpha}-u\Big) \diff x =  \frac{1}{\alpha} u^{\alpha}(t,0) - \int_{\R^-} u(t,x) \diff x, \\
\label{eq:appendix_entropy_diffusion_half_line}
\partial_t \int_{\R^-} u\,\log u \diff x &= -\int_{\R^-} \Big(\frac{1}{\alpha} \partial_x u^{\alpha}-u\Big) \, \frac{\partial_x u}{u} \diff x = -\frac{4}{\alpha^2} \int_{\R^-}|\partial_x u^{\frac{\alpha}{2}}|^2 \diff x + u(t,0). 
\end{align}
We compute 
\begin{align*}
\frac{2}{\alpha+1} u^{(\alpha+1)/2}(t,0) 
&= \int_{\R^-} u^{(\alpha-1)/2}\, \partial_x u \diff x = \frac{2}{\alpha} \int_{\R^-} u^{1/2} \, \partial_x u^{\alpha/2} \diff x \\
&\leq 
\frac{2}{\alpha}\, M^{\frac{1}{2}} \, \left( \int_{\R^-} |\partial_x u^{\alpha/2}|^2 \diff x\right)^{1/2}.
\end{align*}
It follows that for some $C=C(\alpha,M)$ we have $u^{\alpha+1}(t,0) \leq C\, \int_{\R^-} |\partial_x u^{\alpha/2}|^2 \diff x$. In particular, for all $\delta \in (0,1)$,
\begin{align*}
u(t,0)&\leq C^{1/(\alpha+1)}\, \left(\int_{\R^-} |\partial_x u^{\alpha/2}|^2 \diff x\right)^{1/(\alpha+1)} \leq C^{1/\alpha}\, \delta^{-(\alpha+1)/\alpha} +  \delta^{1+\alpha} \int_{\R^-} |\partial_x u^{\alpha/2}|^2 \diff x, \\
u^{\alpha}(t,0)&\leq C^{\frac{\alpha}{\alpha+1}}\, \left(\int_{\R^-} |\partial_x u^{\alpha/2}|^2 \diff x\right)^{\alpha/(\alpha+1)} \leq C^{\alpha}\, \delta^{-(\alpha+1)} +  \delta^{(\alpha+1)/\alpha} \int_{\R^-} |\partial_x u^{\alpha/2}|^2 \diff x.  
\end{align*}
Hence, choosing $\delta>0$ sufficiently small, we combine \eqref{eq:appendix_first_moment_diffusion_half_line}--\eqref{eq:appendix_entropy_diffusion_half_line} to obtain
$$
\partial_t \int_{\R^-} u\,|x| \diff x + \partial_t \int_{\R^-} u\,\log u \diff x + \frac{2}{\alpha^2} \int_{\R^-}|\partial_x u^{\alpha/2}|^2 \diff x \leq C
$$
for some $C=C(\alpha,M)$. We integrate this inequality in time and use inequality \eqref{eq:algebraic_inequalities_on_rho_log_rho} to conclude the bounds on $u\,|x|$ and $\partial_x u^{\alpha/2}$.

\underline{Estimates on $u$ in $L^{\infty}(0,T; L^1(\R^-))$, $L^{\infty}(0,T; L^{\infty}(\R^-))$.} The $L^{\infty}(0,T; L^1(\R^-))$ bound is trivial, so we focus on the bound in $L^{\infty}(0,T; L^{\infty}(\R^-))$. For $k > 1$, we have
\begin{align*}
\partial_t \int_{\R^-} u^{k} \diff x &= k\int_{\R^-} u^{k-1}\, \partial_t u \diff x = - k\,(k-1) \int_{\R^-} u^{k-2}\,\partial_x u\, \Big(\frac{1}{\alpha} \partial_x u^{\alpha} - u\Big) \diff x\\
&\leq - k\,(k-1) \int_{\R^-} u^{k+\alpha-3}\,|\partial_x u|^2 \diff x + k\,(k-1)\, \int_{\R^-} u^{k-1}\, \partial_x u \diff x\\
&\leq - \frac{k\,(k-1)}{2} \int_{\R^-} u^{k+\alpha-3}\,|\partial_x u|^2 \diff x + \frac{k\,(k-1)}{2}\, \int_{\R^-} u^{k-\alpha+1} \diff x.
\end{align*}
At this point, we are in exactly the same situation as in Step 5 of Proposition~3.1 in \cite{skrzeczkowski2026global} (see inequality (3.12) therein). The argument then proceeds identically: using the previous inequality together with the estimate on $\partial_x u^{\frac{\alpha}{2}}$ in $L^2((0,T)\times\R^-)$, the Alikakos' iterations yield the desired $L^{\infty}((0,T)\times\R^-)$ estimates.

\underline{Estimate on $\partial_x u^{\alpha}$.} This follows now immediately from the $L^{\infty}((0,T)\times\R^-)$ bound on $u$ and the $L^2((0,T)\times\R^-)$ bound on $\partial_x u^{\frac{\alpha}{2}}$. 

\section{A priori estimates with explicit constants}

\begin{lem}\label{lem:quantitative_dependence_constants} Let $\alpha>1$, $\ueps, \veps$ be the solution to \eqref{eq:general_cross_diffusion_intro_viscosity}. There exists a constant $C=C(\alpha,s^0)$ depending on initial condition (its $L^1(\R)$, $L^{\infty}(\R)$ norm as well as $\int_{\R} s^0\,|x|^2\diff x$) and $\alpha$ such that
        $$
        \|\seps\|_{L^{\infty}((0,T)\times\R)} \leq C(\alpha,s^0)\,(1+T^{2/(3\alpha)}), \qquad 
        \int_{\R} \seps(T,x) \, |x|^2 \diff x \leq C(\alpha, s^0)\, (1+T^2).
        $$
The same estimates are valid for the limit $s$ in Theorem~\ref{thm:main}.
\end{lem}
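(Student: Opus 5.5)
We prove both estimates at the level of the viscous approximation \eqref{eq:general_cross_diffusion_intro_viscosity}, with constants uniform in $\eps$, and then transfer them to the limit $s$ from Theorem~\ref{thm:main}. The second moment bound is proved first, since it feeds into the $L^\infty$ bound.

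\emph{Second moment.} Summing the two equations gives
\[
\partial_t \seps = \tfrac{1}{\alpha}\partial^2_x \seps^{\alpha} - \partial_x(\ueps-\veps) + \eps\,\partial_x^2\seps .
\]
Testing with $|x|^2$ and integrating by parts (rigorously with a cutoff $\psi_R$ as in Proposition~\ref{prop:trace_at_0_for_separated_solutions}) yields
\[
\frac{d}{dt}\int_{\R}\seps|x|^2\diff x = \frac{2}{\alpha}\int_{\R}\seps^{\alpha}\diff x + 2\int_{\R}(\ueps-\veps)\,x\diff x + 2\eps\int_{\R}\seps\diff x .
\]
The advection term is bounded by $2M^{1/2}\bigl(\int_{\R}\seps x^2\diff x\bigr)^{1/2}$, where $M=\|s^0\|_{L^1}$. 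The energy term is handled by the energy identity for $\frac{1}{\alpha(\alpha-1)}\int_{\R}\seps^{\alpha}\diff x$. Along the flow, the advective contribution is $\int_{\R}(\ueps-\veps)\,\partial_x p(\seps)\diff x$, and Young's inequality absorbs it into the dissipation $\int_{\R}\seps|\partial_x p(\seps)|^2\diff x$. This costs $\tfrac14\int_{\R}\seps\diff x$, so $\int_{\R}\seps^\alpha\diff x\le C(1+t)$. With $m(t)=\int_{\R}\seps x^2\diff x$ we then have
\[
m' \le C(1+t) + C\,m^{1/2},
\]
which integrates to $m(T)\le C(1+T^2)$.

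\emph{$L^\infty$ bound.} We run the Alikakos iteration as in \cite{skrzeczkowski2026global}, Step 5, this time tracking the $T$-dependence. For $k>1$,
\[
\frac{d}{dt}\int_{\R}\seps^k\diff x + c_k\int_{\R}\bigl|\partial_x \seps^{(k+\alpha-1)/2}\bigr|^2\diff x \le C_k\int_{\R}\seps^{k-\alpha+1}\diff x ,
\]
using $|\ueps-\veps|\le\seps$ and Young's inequality. The right-hand side is interpolated with Gagliardo--Nirenberg between $L^1$ and the dissipation. The dimensional balance in 1D is what produces the exponent: scaling gives a supersolution-type growth $\|s\|_\infty\lesssim (1+t)^{\beta}$. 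The specific exponent $2/(3\alpha)$ should match the Barenblatt-type scaling $t^{-1/(\alpha+1)}$ combined with the $O(t)$ spreading forced by the drift. I would try to obtain it directly by comparing with a self-similar profile, or through a moment-weighted Nash-type inequality using $m(t)\le C(1+t^2)$.

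\emph{Main obstacle.} Pinning the exact exponent $2/(3\alpha)$ through the iteration, uniformly in $k$, is the main obstacle. The fallback is to do the iteration on $L^{k}$ norms with constants $C_k$ growing geometrically, while carefully interpolating with the quadratic moment.

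\emph{Passage to the limit.} Both bounds are stable under a.e.\ convergence by Fatou's lemma and weak$^*$ lower semicontinuity, using Theorem~\ref{thm:main}.
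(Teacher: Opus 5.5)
Your second-moment argument is correct and differs only cosmetically from the paper's. You integrate by parts twice and bound $\frac{2}{\alpha}\int_{\R}\seps^{\alpha}\diff x\le C(1+t)$ via the energy inequality. The paper integrates by parts once and uses Cauchy--Schwarz in time with $\int_0^T\!\int_{\R}\seps|\partial_x p(\seps)|^2\diff x\diff t\le CT$, which comes from the same energy inequality. Both give $C(1+T^2)$, and your passage to the limit is fine.

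The genuine gap is the $L^\infty$ bound, which is the real content of the lemma. You do not prove it, and none of the routes you suggest works as stated:
\begin{itemize}
\item Comparison with a self-similar profile is unavailable. The equation for $\seps$ is not closed: it contains $\partial_x(\ueps-\veps)$, i.e.\ a drift $(\ueps-\veps)/\seps$ about whose derivative nothing is known.
\item The Barenblatt/spreading heuristic points towards decay, not towards the growing bound in the statement.
\item A one-shot Gagliardo--Nirenberg interpolation of $\int_{\R}\seps^{k-\alpha+1}\diff x$ between the mass and the dissipation gives $\int_{\R}\seps^k(T)\diff x\le\int_{\R}(s^0)^k\diff x+C_kT$. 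Here $C_k^{1/k}$ grows like a power of $k$, so this is useless as $k\to\infty$.
\end{itemize}
The second moment is not needed at all: the paper's $L^\infty$ bound uses only the mass $M_s=\|s^0\|_{L^1}$ and $\|s^0\|_{L^\infty}$.

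The missing idea is a splitting that makes each Alikakos step depend only on a moment of half the order, with constants polynomial in $\gamma$ and no Gronwall factor. First write
$\int_{\R}\seps^{\gamma-\alpha+1}\diff x\le\|\seps^{(\gamma+\alpha)/2}\|_{L^\infty}\int_{\R}\seps^{(\gamma-3\alpha+2)/2}\diff x$.
Then bound the sup norm by
$\|\seps^{(\gamma+\alpha)/2}\|_{L^\infty}\le\|\partial_x\seps^{(\gamma+\alpha)/2}\|_{L^1}\le\frac{\gamma+\alpha}{2}M_s^{1/2}\|\seps^{(\gamma+\alpha-3)/2}\partial_x\seps\|_{L^2}$.
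Absorbing into the dissipation gives
\[
\frac{d}{dt}\int_{\R}\seps^{\gamma}\diff x\le C\,\gamma^4 M_s\Bigl(\int_{\R}\seps^{(\gamma-3\alpha+2)/2}\diff x\Bigr)^2 .
\]
H\"older's inequality with respect to $\seps\diff x$ then bounds the last integral by $(\int_{\R}\seps^{\gamma/2}\diff x)^{\beta}M_s^{1-\beta}$.

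Set $M(\gamma)=\sup_{[0,T]}\|\seps\|_{L^\gamma}$. The above yields
$M(\gamma)^\gamma\le\int_{\R}(s^0)^\gamma\diff x+CT\,(1+\gamma^8M(\gamma/2)^\gamma)$.
Along $\gamma=2^j\cdot 3\alpha$, time therefore enters only through $\prod_{j\ge1}(1+CT)^{1/(2^j\,3\alpha)}=(1+CT)^{1/(3\alpha)}$. At the base level $\gamma_0=3\alpha$ the right-hand side is simply $CM_s^3$, so $M(3\alpha)\le C(1+T)^{1/(3\alpha)}$. The product of these two factors is exactly where $T^{2/(3\alpha)}$ comes from. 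The exponent is an artifact of the iteration, not of any scaling law.
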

The assumption $\alpha>1$ is made for the sake of simplicity. While it is also possible to track the time dependence when $\alpha\leq1$, the estimates are of greatest interest in the regime $\alpha>1$ since we use them to establish quantitative estimates in Theorem~\ref{thm:quantitative_bounds}. We therefore restrict our attention to this case, where certain cancellations lead to a simpler relationship.
\begin{proof}[Proof of Lemma~\ref{lem:quantitative_dependence_constants}]
We multiply the PDE for $\seps$ by $\gamma\,\seps^{\gamma-1}$
\begin{align*}
\partial_t \int_{\R} \seps^{\gamma} \diff x +\gamma\, (\gamma-1) \int_{\R} \seps^{\alpha+\gamma-3}\,|\partial_x \seps|^{2} \diff x &\leq  \gamma \int_{\R} (\ueps+ \veps) \, |\partial_x \seps^{\gamma-1}| \diff x\\
&\leq \gamma\,(\gamma-1)\, \int_{\R} \seps^{\gamma-1} \, |\partial_x \seps| \diff x,
\end{align*}
where we neglected the term $\eps \int_{\R} \partial_x \seps\, \partial_x \seps^{\gamma-1} \diff x \geq 0$. We write $\seps^{\gamma-1} = \seps^{(\alpha+\gamma-3)/2}\, \seps^{(-\alpha+\gamma+1)/2}$, so applying Cauchy-Schwarz
$$
\int_{\R} \seps^{\gamma-1} \, |\partial_x \seps| \diff x \leq  \frac{1}{2} \int_{\R} \seps^{\alpha+\gamma-3}\,|\partial_x \seps|^{2} \diff x + \frac{1}{2}\int_{\R} \seps^{-\alpha+\gamma+1} \diff x.
$$
Combining the two estimates above we obtain
\begin{equation}\label{eq:evolution_of_the_gamma_moment_before_doing_the_interpolation_without_the_rubbish}
\partial_t \int_{\R} \seps^{\gamma} \diff x  + \frac{\gamma\, (\gamma-1)}{2} \int_{\R} \seps^{\alpha+\gamma-3}\,|\partial_x \seps|^{2} \diff x \leq \frac{\gamma\,(\gamma-1)}{2} \int_{\R} \seps^{-\alpha+\gamma+1} \diff x.
\end{equation}
We estimate
\begin{equation}\label{eq:estimate_Linf_bound_integral_-alph+gamma_first_splitting}
 \int_{\R} \seps^{-\alpha+\gamma+1} \diff x = \int_{\R} \seps^{(\gamma+\alpha)/2}\, \seps^{(\gamma + 2-3\alpha)/2} \diff x \leq \| \seps^{(\gamma+\alpha)/2} \|_{L^{\infty}(\R)} \, \int_{\R} \seps^{(\gamma-3\alpha + 2)/2} \diff x.
\end{equation}
The term $\| \seps^{(\gamma+\alpha)/2} \|_{L^{\infty}(\R)}$ can be controlled by means of the dissipation. Indeed, 
\begin{equation}\label{eq:estimate_L_inf_norm_term_half_of_gamma_via_dissipation}
\| \seps^{(\gamma+\alpha)/2} \|_{L^{\infty}(\R)} \leq  \| \partial_x \seps^{(\gamma+\alpha)/2} \|_{L^{1}(\R)} \leq \frac{\gamma+\alpha}{2}\, \|\seps^{1/2} \|_{L^2(\R)} \, \| \seps^{(\gamma+\alpha-3)/2}\, \partial_x \seps \|_{L^2(\R)}.
\end{equation}
Therefore, with $M_s = \int_{\R} s^0 \diff x$, we obtain
\begin{align*}
 \int_{\R} \seps^{-\alpha+\gamma+1} \diff x & \leq \frac{\gamma+\alpha}{2} \, M_s^{1/2} \, \| \seps^{(\gamma+\alpha-3)/2}\, \partial_x \seps \|_{L^2(\R)} \, \int_{\R} \seps^{(\gamma-3\alpha + 2)/2} \diff x \\
&\leq \frac{1}{2}\,  \| \seps^{(\gamma+\alpha-3)/2}\, \partial_x \seps \|_{L^2(\R)}^2 + \frac{M_s(\gamma+\alpha)^2}{8}  \left( \int_{\R} \seps^{(\gamma-3\alpha + 2)/2} \diff x \right)^2.
\end{align*}
Coming back to \eqref{eq:evolution_of_the_gamma_moment_before_doing_the_interpolation_without_the_rubbish}, we obtain
\begin{align}\label{eq:evolution_gamma_moment_last_step_before_Holder}
\partial_t \int_{\R} \seps^{\gamma} \diff x  
&\leq \frac{M_s\gamma\,(\gamma-1)(\gamma+\alpha)^2}{16}  \left( \int_{\R} \seps^{(\gamma-3\alpha + 2)/2} \diff x \right)^2 \\
&\leq \frac{7^2 M_s\gamma^4}{6^2 \cdot 16}  \left( \int_{\R} \seps^{(\gamma-3\alpha + 2)/2} \diff x \right)^2 \leq \frac{M_s\gamma^4}{10}  \left( \int_{\R} \seps^{(\gamma-3\alpha + 2)/2} \diff x \right)^2, \nonumber
\end{align}
where the second estimate follows by assuming that $6\,\alpha \leq \gamma$ so that $\gamma + \alpha \leq 7\gamma/6$. We finally use Hölder's inequality with respect to the measure $\seps \diff x$ to obtain
\begin{equation}\label{eq:Holder_to_adjust_the_exponent_to_gamma_over_2_for_alikakos}
\int_{\R} \seps^{(\gamma-3\alpha+2)/2} \diff x \leq \left(\int_{\R} \seps^{\gamma/2} \diff x\right)^{\beta} \, M_s^{1-\beta}, \qquad \qquad \beta=\frac{(\gamma-3\alpha+2)/2-1}{\gamma/2-1},
\end{equation}
where $\beta \in [\frac{1}{2},1]$ since $\gamma\geq 6\alpha\geq  2(3\alpha-1)$. From \eqref{eq:evolution_gamma_moment_last_step_before_Holder} we deduce
\begin{align*}
\partial_t \int_{\R} \seps^{\gamma} \diff x &\leq \frac{M_s\, \gamma^4}{10} \, \left( \int_{\R} \seps^{\gamma/2} \diff x \right)^{2\beta}\, M_s^{2(1-\beta)}\leq  \frac{M_s^{3-2\beta}}{10} \, \left(1 +  \gamma^{4/\beta} \,\left( \int_{\R} \seps^{\gamma/2} \diff x \right)^{2}\right).
\end{align*}
We finally observe that $M_s^{3-2\beta} \leq 1+ M_s^3$ and $\gamma \geq 1$, $\beta \geq \frac{1}{2}$ imply that $\gamma^{\frac{4}{\beta}} \leq \gamma^8$ so that
\begin{equation}\label{eq:final_bound_to_start_alikakos}
\partial_t \int_{\R} \seps^{\gamma} \diff x \leq \frac{1+M_s^{3}}{10} \, \left(1 +  \gamma^{8} \,\left( \int_{\R} \seps^{\gamma/2} \diff x \right)^{2}\right).
\end{equation}
We define $M({\gamma})= \sup_{t\in [0,T]} \left(\int_{\R} \seps^{\gamma} \diff x\right)^{\frac{1}{\gamma}}$. Integrating in time and letting $C_{\text{num}} := \frac{1+M_s^{3}}{10}$, we obtain
$$
M(\gamma)^{\gamma} \leq \int_{\R} (s^0)^{\gamma} \diff x + C_{\text{num}} \, T \,\left(1+\gamma^{8} \, M\left( \frac{\gamma}{2}\right)^{\gamma}\right).
$$
In particular,
$$
\max\left(1,M({\gamma})^{\gamma}\right) \leq 1+\int_{\R} (s^0)^{\gamma} \diff x + 2\,C_{\text{num}} \, T\,\gamma^{8}\,\max\left(1,M\left( \frac{\gamma}{2}\right)^{\gamma}\right).
$$
We want to iterate this inequality. We let $A:= \sup_{p\in[1,\infty]} \left(1+\int_{\R} (s^0)^{p} \diff x \right)^{1/p}$, $B:=2\,C_{\text{num}} \, T$ so that
$$
\max\left(1,M({\gamma})^{\gamma}\right) \leq A^{\gamma} + B\,\gamma^{8}\,\max\left(1,M\left( \frac{\gamma}{2}\right)^{\gamma}\right).
$$
In particular, as $A\geq 1$ and $\gamma\geq 1$, 
$$
\max\left(A^{\gamma},M( {\gamma})^{\gamma}\right) \leq \gamma^{8}\,(1+ B)\,\max\left(A^{\gamma},M\left( \frac{\gamma}{2}\right)^{\gamma}\right),
$$
so after raising to the power $1/\gamma$,
$$
\max\left(A,M( {\gamma})\right) \leq \gamma^{8/\gamma}\,(1+ B)^{1/\gamma}\,\max\left(A,M\left( \frac{\gamma}{2}\right)\right).
$$
Letting $\gamma_0 = 3\alpha$ and $W_n = \max\left(A,M( 2^n\,{\gamma_0})\right)$ we obtain
\begin{equation}\label{eq:Alikakos_final_estimate_in_terms_of_W}
W_n \leq W_0 \prod_{j=1}^n (2^j\,\gamma_0)^{8/(2^j \gamma_0)}\, (1+B)^{1/(2^j \gamma_0)}.
\end{equation}
Note that the restriction $\gamma \geq 6\alpha$ that we made when deriving \eqref{eq:final_bound_to_start_alikakos} is satisfied by the choice $\gamma_0 = 3\alpha$. We now compute the constant 
$$
\log \left(\prod_{j=1}^n (2^j \,\gamma_0)^{8/(2^j \gamma_0)}\, (1+B)^{1/(2^j \gamma_0)}\right) = \sum_{j=1}^n \frac{8\, \log(2^j\,\gamma_0)}{2^j\,\gamma_0} + \sum_{j=1}^n \frac{\log(1+B)}{2^j\,\gamma_0} . 
$$
The first sum is bounded by a numerical constant while the second by $\log((1+B)^{1/\gamma_0})$. Plugging back to \eqref{eq:Alikakos_final_estimate_in_terms_of_W} and letting $n\to\infty$, we deduce
$$
\|\seps\|_{L^{\infty}((0,T)\times\R)} \leq C(s^0,\alpha)\,(1+ M(\gamma_0))\,(1+T^{1/(3\alpha)}).
$$
From \eqref{eq:evolution_gamma_moment_last_step_before_Holder} with $\gamma_0 = 3\alpha$, we deduce $M(\gamma_0) \leq C(s^0, \alpha)\,(1+T^{1/(3\alpha)})$, which concludes the proof of the first estimate. For the second one, we first use \eqref{eq:evolution_of_the_gamma_moment_before_doing_the_interpolation_without_the_rubbish} with $\gamma = \alpha$ to obtain
$$
\partial_t \int_{\R} \seps^{\alpha} \diff x  + \frac{\alpha\, (\alpha-1)}{2} \int_{\R} \seps^{2\alpha-3}\,|\partial_x \seps|^{2} \diff x \leq \frac{\alpha\,(\alpha-1)}{2} \int_{\R} \seps \diff x = \frac{\alpha\,(\alpha-1)}{2}\, M_s.
$$
We recognize that $\int_{\R} \seps^{2\alpha-3}\,|\partial_x \seps|^{2} \diff x  = \int_{\R} \seps\,|\partial_xp(\seps)|^2 \diff x$ so after integrating in time, we deduce
\begin{equation}\label{eq:estimate_used_to_prove_the_bound_on_the_2nd_moment}
\int_0^T \int_{\R} \seps\,|\partial_xp(\seps)|^2 \diff x \leq C(\alpha,s^0)\, T.
\end{equation}
Now, we multiply the PDE for $\seps$ by $|x|^2$ to obtain
\begin{align*}
\partial_t &\int_{\R} \seps\,|x|^2 \diff x \leq - 2\int_{\R} \sqrt{\seps} \, \partial_x p(\seps) \, \sqrt{\seps} \, x \diff x + \int_{\R} \seps\,|x| \diff x + \eps \, M_s\\
&\leq 2 \, \left(\int_{\R} \seps \, |\partial_x p(\seps)|^2 \diff x \right)^{1/2} \left(\int_{\R} \seps \, |x|^2 \diff x\right)^{1/2} + M_s^{\frac{1}{2}}\,\left(\int_{\R} \seps \, |x|^2 \diff x\right)^{1/2} + M_s.
\end{align*}
Dividing by $(1 + \int_{\R} \seps \, |x|^2 \diff x)^{1/2}$, we obtain
$$
2\, \partial_t \left(1 + \int_{\R} \seps \, |x|^2 \diff x\right)^{1/2} \leq 2\,\left(\int_{\R} \seps \, |\partial_x p(\seps)|^2 \diff x \right)^{1/2} + M_s^{1/2} + M_s. 
$$
Finally, integrating in time and using \eqref{eq:estimate_used_to_prove_the_bound_on_the_2nd_moment}, we obtain
$$
\left(1 + \int_{\R} \seps(T,x) \, |x|^2 \diff x\right)^{1/2} \leq C(\alpha, s^0)\, (1+T).
$$
The conclusion follows.
\end{proof}

\begin{lem}\label{lem:arbitrary_high_moments}
Let $\alpha>1$, $\ueps, \veps$ be the solution to \eqref{eq:general_cross_diffusion_intro_viscosity}. Let $k\geq 2$ be even. Then, there exists a constant $C=C(\alpha, \|s^0\|_{L^1(\R)}, \|s^0\|_{L^{\infty}(\R)},  \int_{\R} s^0\, |x|^k \diff x, T)$, independent of $\eps$, such that $$
\sup_{t\in[0,T]}\int_{\R} \seps(t,x)\, |x|^k\diff x \leq C.
$$
\end{lem}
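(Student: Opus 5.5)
We argue by induction on the even exponent $k$, following the pattern used for the second moment in Lemma~\ref{lem:quantitative_dependence_constants}. The base case $k=2$ is exactly the second estimate there, with constant $C(\alpha,s^0)(1+T^2)$. Now assume the bound for $k-2$ (and hence, by interpolation with the mass $M_s$, for every moment of order at most $k-2$). We test the equation for $\seps$,
$$\partial_t \seps = \partial_x(\seps\,\partial_x p(\seps)) - \partial_x(\ueps-\veps) + \eps\,\partial_x^2\seps,$$
with $|x|^k = x^k$, which is smooth because $k$ is even. The first moment bounds and the decay of approximate solutions justify the integration by parts; if needed, we use the cutoff $\psi_R$ and let $R\to\infty$ as in the proof of Proposition~\ref{prop:trace_at_0_for_separated_solutions}. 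This gives
$$
\partial_t\int_{\R}\seps\,|x|^k \diff x = -k\int_{\R}\seps\,\partial_x p(\seps)\,x^{k-1}\diff x + k\int_{\R}(\ueps-\veps)\,x^{k-1}\diff x + \eps\,k(k-1)\int_{\R}\seps\,x^{k-2}\diff x .
$$

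We estimate the three terms on the right one at a time. Since $|\ueps-\veps|\le\seps$, the advection term is bounded by $k\int\seps|x|^{k-1}$. By Young's inequality this is at most $\int\seps|x|^k + C_k M_s$, or alternatively it is controlled through the induction hypothesis. The viscous term, for $\eps\le1$, is bounded by the $(k-2)$-moment from the induction hypothesis. For the pressure term we write $\seps\,\partial_x p(\seps)\,x^{k-1}=\sqrt{\seps}\,\partial_xp(\seps)\cdot\sqrt{\seps}\,x^{k-1}$ and apply Cauchy--Schwarz:
$$
k\Big(\int_{\R}\seps|\partial_xp(\seps)|^2\diff x\Big)^{1/2}\Big(\int_{\R}\seps|x|^{2k-2}\diff x\Big)^{1/2}.
$$
Here lies the main obstacle: $2k-2>k$ for $k>2$, so this route does not close. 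Instead we use the divergence structure $\seps\,\partial_xp(\seps)=\frac{1}{\alpha}\partial_x\seps^{\alpha}$ and integrate by parts once more. The pressure term then becomes $\frac{k(k-1)}{\alpha}\int\seps^{\alpha}x^{k-2}\diff x$, which is bounded by $\frac{k(k-1)}{\alpha}\|\seps\|_{L^\infty}^{\alpha-1}\int\seps|x|^{k-2}\diff x$. The $L^\infty$ bound of Lemma~\ref{lem:quantitative_dependence_constants} together with the induction hypothesis controls this quantity. We need $\alpha>1$ for $\seps^\alpha\le\|\seps\|_\infty^{\alpha-1}\seps$, and this is the assumption made in the statement. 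If this second integration by parts creates boundary-at-infinity issues, we handle them with the cutoff $\psi_R$ exactly as in the proof of identity~\eqref{eq:identity_for_the_center_of_the_mass}, using $\seps^\alpha\in L^1$.

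Combining the three estimates yields a linear differential inequality
$$
\partial_t\int_{\R}\seps|x|^k\diff x\le \int_{\R}\seps|x|^k\diff x + C\big(\alpha,\|s^0\|_{L^1},\|s^0\|_{L^\infty},\textstyle\int s^0|x|^{k-2},T\big).
$$
Gronwall's lemma on $[0,T]$ then gives the claimed $\eps$-independent bound. Lower moments of the initial datum are dominated by $\int s^0(1+|x|^k)$, so the constant depends only on the listed quantities. The main point to check carefully is the justification of the integrations by parts for the viscous approximation. The algebraic estimates themselves are straightforward once the pressure term is rewritten as a derivative of $\seps^\alpha$.
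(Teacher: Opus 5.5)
Your proposal is correct and follows essentially the paper's argument. You test with a polynomial weight and put both derivatives on the weight in the pressure term. You then use $\seps^{\alpha}\le\|\seps\|_{L^\infty}^{\alpha-1}\seps$ with the $L^\infty$ bound of Lemma~\ref{lem:quantitative_dependence_constants}, bound the advection and viscous terms by lower moments, and close with Gronwall. The only difference is cosmetic: the paper uses the weight $\phi_k(x)=(1+|x|^2)^{k/2}$, which dominates $\phi_{k-1}$ and $\phi_{k-2}$, so your induction on $k$ is not really needed.
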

\begin{proof}
Let $\phi_k(x) = (1+|x|^2)^{k/2}$. We compute 
$$
\partial_t \int_{\R} \seps \, \phi_k(x) \diff x = \frac{1}{\alpha} \int_{\R} \seps^{\alpha} \, |\partial^2_x \phi_k(x)| \diff x + \int_{\R} \seps\, |\partial_x \phi_k(x)| \diff x + \eps\int_{\R} \seps\,|\partial^2_x \phi_k(x)| \diff x.  
$$
We estimate $\seps^{\alpha} \leq \|\seps\|_{L^{\infty}}^{\alpha-1} \, \seps$, where the $L^{\infty}$ norm is finite by Lemma~\ref{lem:quantitative_dependence_constants}. Moreover, it is not difficult to see that $|\partial^2_x \phi_k| \leq C(k)\, \phi_{k-2}$ and $|\partial_x \phi_k| \leq C(k)\, \phi_{k-1}$. Since 
$$
\int_{\R} \seps\, |x|^{k-1} \diff x \leq \int_{\R} \seps\, |x|^{k-2} \diff x + \int_{\R} \seps\, |x|^{k} \diff x,
$$
the conclusion follows.
\end{proof}
\begin{lem}\label{eq:ulogu_lower_bound_in_terms_of_2nd_moment}
Let $u, v \geq 0$, $s = u+v$. Let $M_{u} = \int_{\R} u \diff x < \infty$, $I_u = \int_{\R} u\,|x|^2 \diff x < \infty$, and analogously for $v$. Then,
\begin{align*}
\int_{\R} &\left(u\log u + v\log v - \frac{1}{2} s\log s\right) \diff x \\ 
&\geq  -\frac{M_u}{4} \log I_u- \frac{\pi}{2} \, M_u^2 -\frac{M_v}{4} \log I_v- \frac{\pi}{2} \, M_v^2 - \frac{\log 2}{2} (M_u + M_v).
\end{align*}
\end{lem}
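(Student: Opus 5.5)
The plan is to reduce the mixed functional to the individual entropies $\int u\log u$, $\int v\log v$ using pointwise convexity, and then to bound each individual entropy from below by its second moment, with an explicit Gaussian comparison.

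\emph{Step 1 (pointwise reduction).} Since $f(r)=r\log r$ is convex, $f\big(\tfrac{u+v}{2}\big)\le \tfrac12\big(f(u)+f(v)\big)$. Written out, this is $\tfrac{s}{2}\log\tfrac{s}{2}\le \tfrac12(u\log u+v\log v)$, that is,
\[
s\log s\le u\log u+v\log v+s\log 2 .
\]
Hence, pointwise,
\[
u\log u+v\log v-\tfrac12 s\log s\ \ge\ \tfrac12\big(u\log u+v\log v\big)-\tfrac{\log 2}{2}\,(u+v).
\]
Integrating gives the term $-\frac{\log 2}{2}(M_u+M_v)$ in the claim. It then remains to show, for a single nonnegative density $u$ with mass $M=M_u$ and second moment $I=I_u$,
\[
\int_{\R} u\log u\,\diff x\ \ge\ -\frac{M}{2}\log I-\pi M^2 .
\]
The claim follows by applying this to $u$ and to $v$ and multiplying by $\tfrac12$. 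If $M=0$ the statement is trivial, so we assume $u\neq 0$, which forces $I>0$.

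\emph{Step 2 (Gibbs-type inequality).} For every $w>0$ and every $y\ge0$ we have $y\log y\ge y\log w+y-w$, since this is just $y\log(y/w)-y+w\ge 0$. We choose the Gaussian weight $w(x)=c\,e^{-\lambda x^2}$ with parameters $c,\lambda>0$ to be fixed. Integrating the inequality with $y=u$ gives
\[
\int u\log u\,\diff x\ \ge\ M\log c-\lambda I+M-c\sqrt{\pi/\lambda}.
\]

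\emph{Step 3 (choice of parameters).} We choose $\lambda=M/(2I)$, so that $\lambda I=M/2$. We then pick $c$ to balance $M\log c$ against $c\sqrt{\pi/\lambda}=c\sqrt{2\pi I/M}$; the natural choice is the optimizer $c=M\sqrt{\lambda/\pi}$. This yields
\[
\int u\log u\ge M\log M+\tfrac M2\log\tfrac{M}{2\pi I}-\tfrac{M}{2}.
\]
This has the main term $-\frac M2\log I$. The remaining terms are of the form $\frac32 M\log M-\frac M2\log(2\pi)-\frac M2$, and we bound them below by $-\pi M^2$ using elementary inequalities such as $M\log M\ge M-1$ and $\log M\ge 1-1/M$. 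If these do not give exactly $-\pi M^2$, we instead take $c=1$ and keep only the $\log$ term, i.e.\ $-c\sqrt{\pi/\lambda}=-\sqrt{2\pi I/M}$. We then absorb it via $\sqrt{2\pi I/M}\le \pi M^2/\ldots+\ldots$, or retune $\lambda$ to $M/(2I)$ times a constant.

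This calibration is the only real obstacle. The convexity step and the Gibbs inequality are routine; the work is in tuning $c$ and $\lambda$ (and any auxiliary inequality) so that the lower-order terms are dominated exactly by $-\pi M^2$, matching the stated constants.
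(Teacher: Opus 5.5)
Your Steps 1 and 2 are correct, and they are the paper's argument in different clothing. The paper bounds the mixing term $\tfrac12 s\big(\tfrac{u}{s}\log\tfrac{u}{s}+\tfrac{v}{s}\log\tfrac{v}{s}\big)$ below by $-\tfrac{\log 2}{2}s$, which is your convexity step. It then applies Jensen against a Gaussian with the same mass as $u$, which is your Gibbs inequality with $\int_\R w\diff x=M$.

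The gap is Step 3, and no calibration can close it. Your choice $\lambda=M/(2I)$, $c=M\sqrt{\lambda/\pi}$ is the global maximizer of $M\log c-\lambda I+M-c\sqrt{\pi/\lambda}$. When $u$ is a centered Gaussian, the optimal $w$ equals $u$ and the Gibbs inequality is an equality. Hence $\int_\R u\log u\diff x\ge\tfrac32M\log M-\tfrac M2\log(2\pi e\,I)$ is the best lower bound in terms of $M$ and $I$ alone. Absorbing the remainder into $-\pi M^2$ would require $\tfrac32\log M-\tfrac12\log(2\pi e)+\pi M\ge0$, which fails for every $M\le 0.65$.

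Consequently the lemma itself is false for small masses. For $u=v$ the $\log 2$ terms cancel, and the claim reduces to $\int_\R u\log u\diff x\ge-\tfrac M2\log I-\pi M^2$. Take $u$ the Gaussian of mass $M=10^{-2}$ and unit variance, so $I=10^{-2}$. Then the left side is $M\log M-\tfrac M2\log(2\pi e)\approx-0.060$, while the right side is $\approx 0.023$. More generally, for any nontrivial pair, replacing $(u,v)$ by $(\lambda u,\lambda v)$ turns the difference of the two sides into $\lambda D+\tfrac34(M_u+M_v)\lambda\log\lambda+O(\lambda^2)$, with $D$ independent of $\lambda$. This is negative for small $\lambda$.

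The paper's proof does not avoid this; it contains a sign slip. For $g(x)=M_u(2\pi\sigma^2)^{-1/2}e^{-|x|^2/(2\sigma^2)}$ one has $\int_\R u\log g\diff x=M_u\log\big(M_u/\sqrt{2\pi\sigma^2}\big)-I_u/(2\sigma^2)$, not $-M_u\log\big(M_u\sqrt{2\pi\sigma^2}\big)-I_u/(2\sigma^2)$. So the choice $\sigma^2=I_u/(2\pi M_u^2)$ actually gives $\int_\R u\log u\diff x\ge 2M_u\log M_u-\tfrac{M_u}{2}\log I_u-\pi M_u^2$. This yields the stated form only when $M_u\ge 1$.

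What your argument, with your optimal parameters, actually proves is
\[
\int_\R\Big(u\log u+v\log v-\tfrac12 s\log s\Big)\diff x\ \ge\ \sum_{w\in\{u,v\}}\Big(\tfrac34 M_w\log M_w-\tfrac{M_w}{4}\log\big(8\pi e\,I_w\big)\Big).
\]
Either this or the paper's bound with $M_u\log M_u+M_v\log M_v$ added to the right-hand side suffices for Step 1 of the proof of Theorem~\ref{thm:quantitative_bounds}. There $M_u$ and $M_v$ are conserved in time and determined by $\|s^0\|_{L^1(\R)}$. So the extra terms are constants that can be absorbed into $C_1(\alpha,s^0)$.
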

\begin{proof}
We first write 
$$
u\log u + v\log v - \frac{1}{2} s\log s = \frac{1}{2}(u\log u + v\log v) + \frac{1}{2}s \left(\frac{u}{s} \log \frac{u}{s} + \frac{v}{s} \log \frac{v}{s}\right).
$$
A calculus computation shows that the second term is bounded from below by $-s\,\log2 /2$. Hence, we only need to control the first term. We concentrate on $u\log u$. Let $g(x) = M_u(2\pi \sigma^2)^{-1/2}\,\exp(-|x|^2/2\sigma^2)$ be the Gaussian distribution with the same mass as $u$. By Jensen's inequality, $\int_{\R} u\log (u/g) \diff x  \geq 0$ so that $\int_{\R} u\log u \diff x \geq \int_{\R} u\log g \diff x$. We compute
$$
\int_{\R} u\log g \diff x = -M_u \log\big(M_u \sqrt{2\pi\sigma^2}\big) -\frac{I_u}{2\sigma^2}.
$$
Choosing $\sigma^2 = I_u/(2\pi M_u^2)$, we obtain
$$
\int_{\R} u\log u \diff x \geq -\frac{M_u}{2} \log I_u- \pi \, M_u^2.
$$
An analogous inequality holds for $v$ and the proof is concluded.
\end{proof}

\bibliographystyle{abbrv}
\bibliography{fastlimit}
\end{document}